\numberwithin{equation}{section}
\newtheorem{theorem}{Theorem}[section]
\newtheorem{prop}[theorem]{Proposition}
\newtheorem{ppt}[theorem]{Property}
\newtheorem{definition}[theorem]{Definition}
\newtheorem{lem}[theorem]{Lemma}
\theoremstyle{remark}
\newtheorem{remark}[theorem]{Remark}
\def\M{\mathsf{M}}
\def\sM{\mathscr{M}}
\def\p{\mathsf{p}}
\def\d{{\sf d}}
\def\ev{\mathsf{C}}
\def\J{\mathbb{J}}
\def\R{\mathbb{R}}
\def\C{\mathbb{C}}
\def\N{\mathbb{N}}
\def\Nz{\mathbb{N}_0}
\def\A{\mathfrak{A}}
\def\K{\frak{K}}
\def\bE{\mathbb{E}}
\def\L{\mathcal{L}}
\def\Lis{\mathcal{L}{\rm{is}}}
\def\F{\mathfrak{F}}
\def\B{\mathbb{B}}
\def\Ok{\mathsf{O}_{\kappa}}
\def\vpk{\varphi_{\kappa}}
\def\psk{\psi_{\kappa}}
\def\Q{\mathsf{Q}^{m}}
\def\cA{\mathcal{A}}
\def\sA{\mathscr{A}}
\def\a{\mathfrak{a}}
\def\kf{\psi^{\ast}_{\kappa}}
\def\Rp{{\rm{Re}}}
\def\id{{\rm{id}}}
\def\gd{{\rm{grad}}}
\def\div{{\rm div}}
\def\sg{{\rm{sign}}}
\def\S{\mathcal{S}}
\def\zW{\prescript{}{0}{W}}
\begin{document}

\title[Global solutions to PME on singular manifolds]{Global solutions to the Porous medium equations on singular manifolds}

\author[Y. Shao]{Yuanzhen Shao}
\address{Department of Mathematics,
         Purdue University, 
         150 N. University Street, 
         West Lafayette, IN 47907, USA}
\email{shao92@purdue.edu}

\subjclass[2010]{58J99, 35K55, 35K65, 35R01, 35B35}
\keywords{The porous medium equation, singular manifolds, Poincar\'e inequality, global existence and stability of solutions}

\begin{abstract}
The main objective of the present work is to discuss the global existence and stability of solutions to the porous medium equations on Riemannian manifolds with singularities. Several different types of solutions are considered. Our proof is based on a Poincar\'e inequality on function space defined on manifolds with singularities.
\end{abstract}
\maketitle

\section{\bf Introduction}

In this paper, we will discuss the global solutions to the porous medium equation on a Riemannian manifold $(\M,g)$ with singularities
\begin{equation}
\label{PME}
\left\{\begin{aligned}
\partial_t u -\Delta_g (|u|^{n-1}u  )&=0   &&\text{on}&&\M\times (0,\infty);\\
u(0)&=u_0   &&\text{on}&&\M
\end{aligned}\right.
\end{equation}
for $n>1$.
Some authors use the convention of $|u|^{n-1}u$ being replaced by $u^n$ for only non-negative solutions. 
Here $\Delta_g$ denotes the Laplace-Beltrami operator with respect to the metric $g$.
When  $n\in (0,1)$, this equation is called the fast diffusion equation.

The porous medium equation is probably one of the most famous nonlinear degenerate parabolic equations, which has been extensively studied by many mathematicians.
J.L.~V\'azquez \cite{Vaz92, Vaz07} proved existence and uniqueness of non-negative weak solutions of Dirichlet problems for the porous medium equation. In a landmark article \cite{DasHam98}, P.~Daskalopoulos and R.~Hamilton showed existence and uniqueness of smooth solutions for the porous medium equation, and the smoothness of the free boundary, namely, the boundary of the support of the solution, under mild assumptions on the initial data. 

In the past decade, there has been rising interest in investigating the porous medium equation on Riemannian manifolds. 
In the compact case, 
we would like to refer the reader to \cite[Section~11.5]{Vaz07} for a survey of existence and uniqueness of different types of solutions to \eqref{PME}.
In another paper \cite{OttoWest05} by F.~Otto and M.~Westdickenberg, based on the gradient flow structure of the porous medium equation proposed by the first author in \cite{Otto01}, 
they proved the contraction 
of solutions to \eqref{PME}  
in the Wasserstein distance on compact manifolds.

The analysis of the porous medium equation on non-compact manifolds heavily relies on the curvature conditions of the manifold under consideration.  For example, even in the linear case of the porous medium equation, namely the heat equation, in a seminal paper by P. Li and S.T. Yau, they showed that the following Li-Yau gradient estimate 
$$
\frac{|\nabla u|^2}{u^2} - \alpha\frac{\partial_t u}{u} \leq \frac{m \alpha^2 K}{2(\alpha-1)} +\frac{m\alpha^2}{2t}
$$
holds on an $m$-dimensional complete manifold with Ricci curvature ${\rm Ric}\geq -K$ for some $K\geq 0$, where $u$ is a positive smooth solution to the heat equation and $\alpha>1$.
This suggests that the curvature conditions play an important role in the analysis on non-compact manifolds.
S.T.~Yau \cite{Yau94} proved a similar type of gradient estimate  for \eqref{PME}, but this estimate also depends on the derivatives of the initial data.
An analogue to the Li-Yau estimate for the general porous medium equation is obtained by D.G. Aronson and  P. B\'enilan in \cite{AronBen79} on $\R^N$, which reads as follows:
$$
\frac{|\nabla f|^2}{(n-1)f + n} + \frac{\partial_t f}{(n-1)f + n}\leq \frac{K}{2t},
$$
where $\displaystyle f=n\frac{u^{n-1}-1}{n-1}$ with $u$ being a positive solution to \eqref{PME} and 
$\displaystyle \frac{2}{K}=\frac{2}{N}+(n-1)$.
The first Aronson-B\'enilan and Li-Yau type estimate for the porous medium equation is attributed to J.L.~V\'azquez \cite{Vaz07} for positive solutions on complete manifolds 
with non-negative Ricci curvature. 
Later, under the weaker assumption ${\rm Ric}\geq -K$ for some $K\geq 0$, P. Lu, L. Ni, J.L. V\'azquez and C. Villani \cite{LuNiVazVil09} derived more general Aronson-B\'enilan and Li-Yau type gradient estimates for the porous medium equation and some fast diffusion equations. 
Under the same condition on the Ricci curvature,  a similar non-local result was obtained by G. Huang and Z. Huang, H. Li \cite{HuaHuaLi13}, and in the same article the authors also proved a Harnack-type inequality for the porous medium equation. 
Under the same geometric assumption, X. Xu \cite{Xu12} proved a localized Hamilton-type inequality for bounded positive solutions to \eqref{PME}. 
Z. Qian and Z. Zhang \cite{QianZhang14} also derived a similar Aronson-B\'enilan type gradient estimate for equations of the form $\partial_t u-\Delta u^n =V u^n$, where $V$ is a smooth vector field, satisfying the Bakry-mery curvature-dimension condition. 
Interested readers may also refer to \cite{ChaLee12, HuaXuZen16, Zhu13} for related results and some Liouville-type theorems for the porous medium equation.

A well-known phenomenon of the porous medium equation is its smoothing effect. For example, the solution to \eqref{PME} on bounded Euclidean domains with homogeneous Dirichlet boundary
conditions satisfies
$$
\|u(t)\|_\infty \leq \frac{C}{t^{\frac{1}{n-1}}}.
$$
To the best of the author's knowledge, J.L. V\'azquez was the first to study this effect on manifolds. More precisely, in \cite{Vaz14} he established the following inequality for the solution to \eqref{PME} 
on the hyperbolic space $\mathbb{H}^d$
$$
\|u(t)\|_\infty \leq C (\frac{\ln t}{t})^{\frac{1}{n-1}}.
$$
Shortly afterwards,  G.~Grillo and M.~Muratori \cite{GriMur16} generalized this result to a class of complete and simply connected manifolds whose sectional curvature is bounded above by a negative constant. 
More recent achievements in this direction can be found in \cite{GriMurVaz16}. 
In this article, the authors also obtained a radius estimate for the free boundary of the solution to \eqref{PME} with compactly supported initial data. 
The study of the free boundary of positive solutions to the porous medium equation is another interesting topic. 
To keep this introduction at a reasonable length, we will not go into details on this topic. 
Similar results on the ultracontractivity of solutions to \eqref{PME} for small time have been obtained by M.~Bonforte and G.~Grillo in \cite{BonGri05}. 
In another paper \cite{GriMurPun15} by G.~Grillo, M.~Muratori and F.~Punzo, they established existence and uniqueness of solutions to \eqref{PME} with $u_0$ being finite Radon measures under the same geometric assumptions as in \cite{GriMur16}. Finally, interested readers can find more interesting topics related to the porous medium equation on manifolds in \cite{CaoZhu15, Dek08, Dem05, Dem08, KisShtZla08}.

In the case of manifolds with singularities, apparently, we will lose the benefit of the curvature conditions proposed in the work mentioned above. 
To the best of the author's knowledge, the only results concerning  the porous medium equations in the singular manifolds setting are contained in the following two papers \cite{RoiSch15, Shao14}. 

In \cite{RoiSch15}, the authors established the short time existence of solution to \eqref{PME} on manifolds with conical singularities. This result is based on the earlier work of \cite{RoiSch13, RoiSch14, SchSei05} on bounded imaginary powers of the Laplace-Beltrami operator on Mellin-Sobolev spaces. More precisely, given a compact closed manifold $B$, the Laplace-Beltrami on the conical manifold $([0,1)\times B) / (\{0\}\times B)$  reads as 
$$\Delta_c=t^{-2}((t \partial_t)^2 + ({\rm dim} B-1)(t\partial_t) + \Delta_B). $$
Based on the $\mathscr{R}$-sectoriality of $\Delta_c$, in \cite{RoiSch15}, the authors established the $\mathscr{R}$-sectoriality for operators of the form $-\bar{u}\Delta_c$ with $\mathscr{R}$-angle $<\pi/2$ on a conical manifold $([0,1)\times B) / (\{0\}\times B)$, where $\bar{u}=C+ u^*$, by a localization argument and the perturbation results of $\mathscr{R}$-sectorial operators. Here $C>0$ is a constant and  the function $u^*\to 0$ while approaching $\{0\}\times B$ and $u^*$ belongs to some Mellin-Sobolev spaces. Then the local well-posedness of equation~\eqref{PME} follows from the $\mathscr{R}$-sectoriality and maximal $L_p$-regularity theory for initial data of the same form as $\bar{u}$. 
The interested reader can refer to \cite{DenHiePru03, Pru03, PruSim16} for more details of operators of bounded imaginary powers, $\mathscr{R}$-sectorial operators, and maximal regularity theory. 	
This result is comparable to the first part of Section~5 below, where local well-posedness of \eqref{PME} has been established for a class of singular manifolds, including but not restricted to conical manifolds. 

In \cite{Shao14}, we showed that \eqref{PME} admits a local in time solution for initial data degenerating like $\d^\alpha$, where $\d$ can be taken to be the distance to the singular sets and $\alpha>2/(n-1)$. By this choice, the set of degeneracy of solutions to \eqref{PME} in \cite{Shao14} is the singularities of the background manifolds and this solution has a waiting-time phenomenon as was well-known for solutions to the porous medium equation in Euclidean spaces.

In the current paper, our first goal is to extend the local well-posedness result for the porous medium equation obtained in \cite{RoiSch15} to a larger class of singular manifolds, and then we will prove the (exponentially) asymptotic stability for the equilibria of the porous medium equation on singular manifolds. 
These results are established in a weighted Sobolev space framework. 
See Section~2 for more details of these spaces.
The cornerstone for our results is a Poinca\'e inequality \eqref{S4: Poincare ineq} on a class of singular manifolds coupled with a form operator argument, based upon which we establish the maximal $L_p$-regularity property for a class of differential operators of the form 
\begin{equation}
\label{S1: DO}
[u\mapsto -\div_g(a \gd_g u)].
\end{equation}
Here $\div_g$ and $\gd_g$ are the divergence and gradient operators with respect to the singular metric $g$, and $a$ is a $C^1$ function satisfying $\inf a >0$ and $\|a\|_\infty<\infty$. 
So roughly speaking, this result relaxes the asymptotic condition  (toward the singular ends) of the coefficient $\bar{u}$ of the differential operator $-\bar{u}\Delta_c$   imposed in \cite{RoiSch15}.
From the Poinca\'e inequality \eqref{S4: Poincare ineq}, a spectrum bound for \eqref{S1: DO} can be easily derived. 
This bound is enough to obtain the stability of the stationary solutions to the porous medium equations via the linearized stability theory.

Another approach to nonlinear parabolic problems is the implicit time discretization method. To be precise, we first write a quasilinear parabolic equation into an abstract ODE problem of the form
\begin{equation}
\label{S1: ODE}
\frac{d}{dt}u +\cA(u)=0,\quad u(0)=u_0,
\end{equation}
where $\cA$ is a nonlinear operator acting in a Banach space $X$. (We can actually allow $\cA$ to be multi-valued, but for the sake of simplicity, we always assume in this paper that $\cA$ is single-valued, which is enough for the theory of the porous medium equation.) Then we can approximate the solution in the following way: we partition the time interval $J=[0,T]$ into $\mathcal{P}=\{0=t_0\leq t_1\leq \cdots \leq t_{N-1} \leq t_N= T\}$. Put $u_i:=u(t_i)$ and $\delta_i:=t_i-t_{i-1}$. Then we can solve $u_i$ from $u_{i-1} $ implicitly from
$$
\frac{u_i- u_{i-1}}{\delta_i} +\cA(u_i)= 0,\quad i=1,\cdots,N.
$$
If the resolvent operator $R(\lambda,\cA):=(\id+\lambda\cA)^{-1}$ are well defined as operators in $X$ for all $\lambda>0$, we can solve
$$
u_i= R(\delta_i,\cA)u_{i-1} .
$$
Then we can piece together the discrete solution to obtain an approximate solution to \eqref{S1: ODE} by, for example, piecewise constant functions or piecewise affine functions.

To consider the convergence of the approximate solutions, we will need the concept of $\varepsilon$-discretization, by which we mean a partition of $J$ such that
$$
t_i-t_{i-1}<\varepsilon.
$$
A function $u\in C(J,X)$ is called a mild solution to \eqref{S1: ODE} if for every $\varepsilon>0$, there exists an $\varepsilon$-discretization with an approximate solution $u_\varepsilon$ such that
$$
\|u(t)- u_\varepsilon(t)\|_X\leq \varepsilon,\quad t\in J.
$$
The well-known Crandall-Liggett theorem says that the abstract Cauchy problem \eqref{S1: ODE} has a unique mild solution for every $u_0\in \overline{dom(\cA)}$ if the operator $\cA$ is $m$-accretive. The definition of $m$-accretive operators can be found in Section~6. Interested readers may refer to \cite{Bar73,CraLig71,Vaz07} for more details of mild solutions and nonlinear semigroup theory. In Section~6, we show that, on a singular manifold $(\M,g)$, \eqref{PME} admits a global mild solution $u\in C(J,L_1(\M))$ for any interval $J=[0,T]$ and $u_0\in L_1(\M)$.

This paper is organized as follows. In Section~2, we will state the concept of singular manifolds used in this paper and the precise definition of the function spaces on singular manifolds. 
Section~3 is the cornerstone of this paper, in which we will prove a Poincar\'e inequality on a class of singular manifolds, including edge or conic manifolds. Based on this inequality, in Section~4, we will show the crucial contraction properties of the semigroup generated by a class of differential operators including the Laplace-Beltrami operator. These properties give the theoretic basis of the lineaized stability argument in Section~5 and the $m$-accretivity of the nonlinear operator in Section~6. 
In Section~5, we will first generalize the local well-posedness result of the porous medium equation on conic manifolds obtained by N.~Roidos and E.~Schrohe in \cite{RoiSch14} to a larger class of singular manifolds, and then we will derive the stability of equilibria in the framework of the well-posedness theorem. Under the same geometric assumptions as in Section~5, in the last section, we will prove the global existence of mild solutions for the porous medium equation in the spirit of the limit of the implicit time discretization solutions introduced above.

\textbf{Notations:} 
Given any topological set $U$, $\mathring{U}$ denotes the interior of $U$. 
\smallskip\\
For any two Banach spaces $X,Y$, $X\doteq Y$ means that they are equal in the sense of equivalent norms. The notation $\Lis(X,Y)$ stands for the set of all bounded linear isomorphisms from $X$ to $Y$.
\smallskip\\
Given any Banach space $X$ and  manifold $\mathscr{M}$,
let $\| \cdot \|_\infty$, $\| \cdot \|_{s,\infty}$, $\|\cdot \|_p$ and $\|\cdot \|_{s,p}$ denote the usual norm of the $X$-valued Banach spaces $BC(\mathscr{M},X)$(or $L_\infty(\mathscr{M},X)$), $BC^s(\mathscr{M},X)$, $L_p(\mathscr{M},X)$ and $W^s_p(\mathscr{M},X)$, respectively. If the letter $X$ is omitted in the definition of these space, e.g., $BC(\sM)$, it means the corresponding space is $\C$-valued.
\smallskip\\ 

\section{\bf Singular manifolds and weighted function spaces}

\subsection{\bf Uniformly regular Riemannian manifolds and Singular manifolds}

The following concepts of {\em uniformly regular Riemannian manifolds} and {\em singular manifolds} were introduced by H.~Amann in \cite{Ama13, AmaAr}. For the aim of this paper, we will restrict ourselves to only manifolds without boundary. For boundary value problems, the reader may refer to \cite{Ama13b, Shao1603}. The result in this paper can be generalized to {\em singular manifolds} with boundary by using the results in \cite{Shao1603}.

Let $(\M,g)$ be a $C^\infty$-Riemannian manifold of dimension $m$   without boundary endowed with $g$ as its Riemannian metric such that its underlying topological space is separable. An atlas $\A:=(\Ok,\vpk)_{\kappa\in \K}$ for $\M$ is said to be normalized if 
$
\vpk(\Ok)=\Q,
$
where  $\Q$ is the unit cube at the origin in $\R^m$. We put $\psk:=\vpk^{-1}$. 

The atlas $\A$ is said to have \emph{finite multiplicity} if there exists $K\in \N $ such that any intersection of more than $K$ coordinate patches is empty. Put
\begin{align*}
\mathfrak{N}(\kappa):=\{\tilde{\kappa}\in\K:\mathsf{O}_{\tilde{\kappa}}\cap\Ok\neq\emptyset \}.
\end{align*} 
The finite multiplicity of $\A$ and the separability of $\M$ imply that $\A$ is countable.

An atlas $\A$ is said to fulfil the \emph{uniformly shrinkable} condition, if it is normalized and there exists $r\in (0,1)$ such that $\{\psk(r{\Q}):\kappa\in\K\}$ is a cover for ${\M}$. 

$(\M,g)$ is said to be a {\em{uniformly regular Riemannian manifold}} if it admits an atlas $\A$ such that
\begin{itemize}
\item[(R1)] $\A$ is uniformly shrinkable and has finite multiplicity. If $\M$ is oriented, then $\A$ is orientation preserving.
\item[(R2)] $\|\varphi_{\eta}\circ\psk \|_{k,\infty}\leq c(k) $, $\kappa\in\K$, $\eta\in\mathfrak{N}(\kappa)$, and $k\in{\N}_0$.
\item[(R3)] $\kf g\sim g_m $, $\kappa\in\K$. Here $g_m$ denotes the Euclidean metric on ${\R}^m$ and $\kf g$ denotes the pull-back metric of $g$ by $\psk$.
\item[(R4)] $\|\kf g\|_{k,\infty}\leq c(k)$, $\kappa\in\K$ and $k\in\Nz$.
\end{itemize}
Here $\|u\|_{k,\infty}:=\max_{|\alpha|\leq k}\|\partial^{\alpha}u\|_{\infty}$, and it is understood that a constant $c(k)$, like in (R2), depends only on $k$. An atlas $\A$ satisfying (R1) and (R2) is called a \emph{uniformly regular atlas}. (R3) reads as
\begin{center}
$|\xi|^2/c\leq \kf g(x)(\xi,\xi) \leq{c|\xi|^2}$,\hspace{.5em} for any $x\in \Q,\xi\in \R^m, \kappa\in\K$ and some $c\geq{1}$.
\end{center}

Assume that $\rho\in C^{\infty}(\M,(0,\infty))$. Then $(\rho,\K)$ is a {\em singularity datum} for $\M$ if
\begin{itemize}
\item[(S1)] $(\M,g/\rho^2)$ is a {\em uniformly regular Riemannian manifold}.
\item[(S2)] $\A$ is a uniformly regular atlas.
\item[(S3)] $\|\kf\rho\|_{k,\infty}\leq c(k)\rho_{\kappa}$, $\kappa\in\K$ and $k\in\N_0$, where $\rho_{\kappa}:=\rho(\psk(0))$.
\item[(S4)] $\rho_{\kappa}/c\leq \rho(\p)\leq c\rho_{\kappa}$, $\p\in\Ok$ and $\kappa\in\K$ for some $c\geq 1$ independent of $\kappa$.
\end{itemize}
Two {\em singularity data} $(\rho,\K)$ and $(\tilde{\rho},\tilde{\K})$ are equivalent, if
\begin{itemize}
\item[(E1)] $\rho\sim \tilde{\rho}$.
\item[(E2)] card$\{\tilde{\kappa}\in\tilde{\K}:\mathsf{O}_{\tilde{\kappa}}\cap\Ok\neq\emptyset\}\leq c$, $\kappa\in\K$.
\item[(E3)] $\|\varphi_{\tilde{\kappa}}\circ\psk\|_{k,\infty}\leq{c(k)}$, $\kappa\in\K$, $\tilde{\kappa}\in\tilde{\K}$ and $k\in{\N}_0$
\end{itemize}
We write the equivalence relationship as $(\rho,\K)\sim(\tilde{\rho},\tilde{\K})$. (S1) and (E1) imply that 
\begin{align*}
1/c\leq \rho_{\kappa}/\tilde{\rho}_{\tilde{\kappa}}\leq c,\hspace*{.5em} \kappa\in\K,\hspace*{.5em} \tilde{\kappa}\in\tilde{\K}\text{ and }\mathsf{O}_{\tilde{\kappa}}\cap\Ok\neq \emptyset.
\end{align*}
{\em A singularity structure}, $\mathfrak{S}(\M)$, for $\M$ is a maximal family of equivalent {\em singularity data}. A {\em singularity function} for $\mathfrak{S}(\M)$ is a function $\rho\in C^{\infty}(\M,(0,\infty))$ such that there exists an atlas $\A$ with $(\rho,\A)\in\mathfrak{S}(\M)$. The set of all {\em singularity functions} for $\mathfrak{S}(\M)$ is the {\em singular type}, $\mathfrak{T}(\M)$, for $\mathfrak{S}(\M)$. By a {\em{singular manifold}} we mean a Riemannian manifold $\M$ endowed with a singularity structure $\mathfrak{S}(\M)$. Then $\M$ is said to be \emph{singular of type} $\mathfrak{T}(\M)$. If $\rho\in\mathfrak{T}(\M)$, then it is convenient to set $[\![\rho]\!]:=\mathfrak{T}(\M)$ and to say that $(\M,g;\rho)$ is a {\em singular manifold}. A {\em singular manifold} is a {\em uniformly regular Riemannian manifold} iff $\rho\sim {\bf 1}_{\M}$. 



Lastly, for each $k\in\N$, the concept of {\em{$C^k$-uniformly regular Riemannian manifold}} is defined by modifying (R2), (R4) and (L1), (L2) in an obvious way. Similarly, {\em{$C^k$-singular manifolds}} are defined by replacing the smoothness of $\rho$ by $\rho\in C^k(\M,(0,\infty))$ and altering (S1)-(S3) accordingly.

\subsection{\bf Tensor bundles}
Suppose $(\M,g;\rho)$ is a {\em singular manifold}.
Given $\sigma,\tau\in\N_0$, 
$$T^{\sigma}_{\tau}{\M}:=T{\M}^{\otimes{\sigma}}\otimes{T^{\ast}{\M}^{\otimes{\tau}}}$$ 
is the $(\sigma,\tau)$-tensor bundle of $\M$, where $T{\M}$ and $T^{\ast}{\M}$ are the (complexified) tangent and the cotangent bundle of ${\M}$, respectively.
We write $\mathcal{T}^{\sigma}_{\tau}{\M}$ for the $C^{\infty}({\M})$-module of all smooth sections of $T^{\sigma}_{\tau}\M$,
and $\Gamma(\M,T^{\sigma}_{\tau}{\M})$ for the set of all sections.

We denote by $\nabla=\nabla_g$ the extension of the Levi-Civita connection over $\mathcal{T}^{\sigma}_{\tau}{\M}$.
Set $\nabla_{i}:=\nabla_{\partial_{i}}$ with $\partial_{i}=\frac{\partial}{\partial{x^i}}$. 
For $k\in\Nz$, we define
$$\nabla^k: \mathcal{T}^{\sigma}_{\tau}{\M}\rightarrow{\mathcal{T}^{\sigma}_{\tau+k}{\M}},\quad a\mapsto{\nabla^k{a}}$$
by letting $\nabla^0 a:=a$ and $\nabla^{k+1} a:=\nabla\circ\nabla^k a$.
We can also extend the Riemannian metric $(\cdot|\cdot)_g$ from the tangent bundle to any $(\sigma,\tau)$-tensor bundle $T^{\sigma}_{\tau}{\M}$, which is still written as $(\cdot|\cdot)_g$. Meanwhile, $(\cdot|\cdot)_{g^*}$ stands for the induced contravariant metric. 
In addition,
$$|\cdot|_g:=|\cdot|_{g^\tau_\sigma}:\mathcal{T}^{\sigma}_{\tau}{\M}\rightarrow{C^{\infty}}({\M}),\quad a\mapsto\sqrt{(a|a)_g}$$
is called the (vector bundle) \emph{norm} induced by $g$.
\smallskip\\
We assume that $V$ is a $\C$-valued tensor bundle on $\M$, i.e.,
$$V=V^{\sigma}_{\tau}:=\{T^{\sigma}_{\tau}\M, (\cdot|\cdot)_g\},$$ 
for some $\sigma,\tau\in\N_0$. 

Throughout the rest of this paper, unless stated otherwise, we always assume that 
\smallskip
\begin{mdframed}
\begin{itemize}
\item $(\M,g;\rho)$ is a {\em singular manifold} with $\rho\leq 1$.
\item $\rho\in \mathfrak{T}(\M)$, $s\geq 0$, $k\in\Nz$, $1<p<\infty$ and $\vartheta\in \R$.
\item $\sigma,\tau\in \Nz$, $V=V^{\sigma}_{\tau}:=\{T^{\sigma}_{\tau}\M, (\cdot|\cdot)_g\}$.
\end{itemize}
\end{mdframed}


\subsection{\bf Weighted function spaces}

We denote by $\mathcal{D}({\M},V)$ the space of smooth sections of $V$ that is compactly supported in $\M$.
Then the weighted Sobolev space $W^{k,\vartheta}_p({\M},V)$ is defined as the completion of $\mathcal{D}({\M},V)$ in $L_{1,loc}(\M,V)$ with respect to the norm
\begin{center}
$\|\cdot\|_{k,p;\vartheta}: u\mapsto(\sum_{i=0}^{k}\|\rho^{\vartheta+i+\tau-\sigma}|\nabla^{i}u|_{g}\|_p^p)^{\frac{1}{p}}$.
\end{center}
Note that $W^{0,\vartheta}_p(\M, V)=L^{\vartheta}_p(\M, V)$ with equal norms. In particular, we can define the weighted spaces $L_q^\vartheta(\M,V)$ for $q\in \{1,\infty\}$ in a similar manner.

Define
$$BC^{k,\vartheta}(\M,V):=(\{u\in{C^k({\M},V)}:\|u\|_{k,\infty;\vartheta}<\infty\},\|\cdot\|_{k,\infty;\vartheta}),$$
where $\|u\|_{k,\infty;\vartheta}:={\max}_{0\leq{i}\leq{k}}\|\rho^{\vartheta+i+\tau-\sigma}|\nabla^{i}u|_{g}\|_{\infty}$.
We also set
$$BC^{\infty,\vartheta}({\M},V):=\bigcap_{k}BC^{k,\vartheta}(\M, V).$$ 
The weighted Sobolev-Slobodeckii spaces are defined as
\begin{align*}
W^{s,\vartheta}_p({\M},V):=(L^{\vartheta}_p(\M, V),W^{k,\vartheta}_p(\M, V))_{s/k,p},
\end{align*}
for $ s\in \R_+\setminus\Nz$, $k=[s]+1$,
where $(\cdot,\cdot)_{\theta,p}$ is the real interpolation method \cite[Chapter I.2.4.1]{Ama95}.
\smallskip\\

We denote by $\ev^{\sigma+1}_{\tau+1}: V^{\sigma+1}_{\tau+1}\rightarrow V^\sigma_\tau$ the contraction with respect to position $\sigma+1$ and $\tau+1$, that is for any $(i)\in\J^\sigma$, $(j)\in\J^\tau$ and $k,l\in\J^1$ and $\p\in\M$
$$
\ev^{\sigma+1}_{\tau+1}a  :=\ev^{\sigma+1}_{\tau+1} a^{(i;k)}_{(j;l)} \frac{\partial}{\partial x^{(i)}}\otimes \frac{\partial}{\partial x^k}\otimes dx^{(j)}\otimes dx^l:=a^{(i;k)}_{(j;k)} \frac{\partial}{\partial x^{(i)}} \otimes dx^{(j)} 
$$
in every local chart. Recall that the surface divergence of tensor fields with respect to the metric $g$ is the map
\begin{equation*}
\div=\div_g: C^1(\M,V^{\sigma+1}_\tau)\rightarrow C(\M, V^\sigma_\tau), \quad a\mapsto \ev^{\sigma+1}_{\tau+1}(\nabla a).
\end{equation*}



In the rest of this subsection, we will present some properties of weighted function spaces. Most of their proofs can be found in \cite{Ama13, AmaAr, Shao15}, and will thus be omitted.

\begin{ppt}
\label{S2: pointwise multiplication}
Let $k\in\Nz$. Then $[(v_1,v_2)\mapsto v_1 v_2]$ is a bilinear and continuous map for $k\in\Nz$ and $s\leq k$
$$BC^{k,\vartheta_1}(\M)\times W^{s,\vartheta_2}_p(\M, V)\rightarrow W^{s,\vartheta_1+\vartheta_2}_p(\M, V).$$ 
\end{ppt}

Put $\gd:=\gd_g$ as the gradient with respect to $g$.
\begin{ppt}
\label{S2: nabla}
For $\F\in \{BC, W_p\}$, we have
$$\nabla\in \L(\F^{s+1,\vartheta}(\M, V^\sigma_\tau), \F^{s,\vartheta}(\M, V^\sigma_{\tau+1}),$$
and
$$\gd \in \L(\F^{s+1,\vartheta}(\M, V^\sigma_\tau), \F^{s,\vartheta+2}(\M, V^{\sigma+1}_\tau)),$$
and
$$\div\in\L(\F^{s+1,\vartheta}(\M, V^{\sigma+1}_\tau), \F^{s,\vartheta}(\M, V^\sigma_\tau)).$$
\end{ppt}

\begin{ppt}
\label{S2: change of wgt}
For $\F\in\{BC, W_p\}$, we have
$$f_{\vartheta}:=[u\mapsto \rho^{\vartheta}u] \in \Lis(\F^{s,\vartheta^\prime+\vartheta}(\M, V),\F^{s,\vartheta^\prime}(\M, V)).$$ 
\end{ppt}


\begin{ppt}
\label{S2: interpolation}
Suppose that $k_i\in \Nz$, $\vartheta_i\in \R$ with $i=0,1$, $0<\theta<1$ and $k_0<k_1$. Then
$$(W^{k_0,\vartheta_0}_p(\M,V), W^{k_1,\vartheta_1}_p(\M,V))_{\theta,p} \doteq W^{k_\theta,\vartheta_\theta}_p(\M,V).$$
Here $\xi_\theta:=(1-\theta)\xi_0+\theta \xi_1$ for any $\xi_0,\xi_1\in \R$, and $k_\theta\notin \N$.
\end{ppt}

\begin{ppt}
\label{S2: Sobolev embedding}
Suppose that $\vartheta\in\R$ and $s>k+\frac{m}{p}$. Then
$$W^{s,\vartheta}_p(\M) \hookrightarrow BC^{k,\vartheta+\frac{m}{p}}(\M). $$
\end{ppt}

\begin{ppt}
\label{S2: embedding}
For $\vartheta_1<\vartheta_0$ and $\F\in \{BC, W_p\}$,
$\F^{s,\vartheta_1}(\M) \hookrightarrow \F^{s,\vartheta_0}(\M)$.
\end{ppt}

\begin{ppt}
\label{S2: substituion op}
Let $U^{k,\vartheta}_b=\{u\in BC^{k,\vartheta}(\M):\, \inf \rho^\vartheta u>b\}$ with some $b>0$. Suppose that $\alpha\in\R$. Then 
$$ [u\mapsto u^\alpha]\in C^\omega(U^{k,\vartheta}_b, BC^{k,\vartheta\alpha}(\M)).$$
\end{ppt}
\begin{proof}
By Property~\ref{S2: change of wgt}, $[u\mapsto \rho^\vartheta u]\in C^\omega(U^{k,\vartheta}_b, \hat{U}^k_b)$, where
$\hat{U}^{k,\vartheta}_b: = \{u\in BC^{k,0}(\M):\, \inf u>b\}$. Note that $(\hat{\M},\hat{g}):=(\M,g/\rho^2)$ is a {\em uniformly regular Riemannian manifold}. It is shown in \cite[Section~4]{Ama13b} that
$$BC^{k,0}(\M)\doteq BC^k(\hat{\M}). $$
We can slightly modify the proof of \cite[Proposition~6.3]{ShaoSim13} and conclude that 
$$[u\mapsto u^\alpha]\in C^\omega(\hat{U}^{k,\vartheta}_b, BC^k(\hat{\M}) ).$$
In view of $u^\alpha= \rho^{-\vartheta\alpha}(\rho^\vartheta u)^\alpha$ and Property~\ref{S2: change of wgt}, the claimed result follows.
\end{proof}

\subsection{\bf Unweighted Sobolev spaces}

The unweighted Sobolev spaces are defined in the usual manner. For simplicity, we only state the least necessary definitions and properties.

Define $W^1_2(\M,V):=$ the closure of $\mathcal{D}(\M,V)$ with respect to the norm $\|\cdot\|_{1,2}$, where
$$
\|u\|_{1,2}:= (\|u\|_2^2 + \||\nabla u|_g\|_2^2)^{1/2}.
$$
Here $\|\cdot\|_2$ is the norm of $L_2(\M,V)$. 

For any $u,v\in \mathcal{D}(\M,V)$, we define the duality pairing $\langle \cdot | \cdot \rangle$ of $L_2(\M)$ as follows.
$$\langle u | v\rangle:= \int\limits_\M ( u |\bar{v} )_g\, d\mu_g. $$ 
Then the negative order Sobolev space is defined by
$$
W^{-1}_2(\M,V):=(W^1_2(\M, V))^\prime
$$
with respect to $\langle \cdot | \cdot \rangle$. For any $u\in L_2(\M,V^\sigma_\tau)$, the gradient and divergence are defined as follows. For $v\in \mathcal{D}(\M,V^{\sigma+1}_\tau )$,
\begin{equation}
\label{S2: div-thm-unweighted-1}
\langle \gd u | v\rangle := -\langle u | \div v\rangle,
\end{equation}
and for $v\in \mathcal{D}(\M, V^{\sigma-1}_\tau)$
\begin{equation}
\label{S2: div-thm-unweighted-2}
\langle \div u | v\rangle := -\langle u | \gd v\rangle.
\end{equation}

By a density argument, it is obvious that \eqref{S2: div-thm-unweighted-1} and \eqref{S2: div-thm-unweighted-2} also hold true for $v\in W^1_2(\M,V^{\sigma+1}_\tau )$ or $v\in  W^1_2(\M, V^{\sigma-1}_\tau)$, respectively.
\begin{prop}
\label{S2: nabla-unweighted}
It holds for $k=1,0$ that
$$
\gd\in \L(W^k_2(\M, V^\sigma_\tau), W^{k-1}_2(\M, V^{\sigma+1}_\tau)
,\quad
\div\in\L(W^k_2(\M, V^{\sigma+1}_\tau), W^{k-1}_2(\M, V^\sigma_\tau)).
$$
It is understood that $W^0_2(\M):=L_2(\M)$.
\end{prop}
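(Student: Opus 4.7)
The plan is to handle the four claims in two stages: first $k=1$ (where boundedness is immediate from the definition of the $W^1_2$-norm) and then $k=0$ (which follows from the $k=1$ case via the duality identities \eqref{S2: div-thm-unweighted-1}--\eqref{S2: div-thm-unweighted-2}).

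For $k=1$, I would argue pointwise on fibres. Since $\gd u$ is obtained from $\nabla u$ by raising a covariant index via $g^{-1}$, a direct coordinate computation gives $|\gd u|_g=|\nabla u|_g$ pointwise on $\M$. Similarly, because $\div u=\ev^{\sigma+1}_{\tau+1}(\nabla u)$ is a fibrewise contraction of $\nabla u$, there is a purely dimensional constant $c=c(\sigma,\tau,m)$ with $|\div u|_g\leq c\,|\nabla u|_g$ pointwise. Combined with the fact that $\||\nabla u|_g\|_2\leq \|u\|_{1,2}$ (which is built into the very definition of $\|\cdot\|_{1,2}$), this yields
$$
\|\gd u\|_2\leq \|u\|_{1,2},\qquad \|\div u\|_2\leq c\,\|u\|_{1,2},
$$
i.e. the boundedness of both $\gd$ and $\div$ as maps $W^1_2\to L_2$.

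For $k=0$, I would use the duality definition of $W^{-1}_2$. Given $u\in L_2(\M,V^{\sigma}_{\tau})$ and any test section $v\in\mathcal{D}(\M,V^{\sigma+1}_{\tau})$, the identity \eqref{S2: div-thm-unweighted-1} and the $k=1$ bound applied to $v$ give
$$
|\langle \gd u\,|\,v\rangle|=|\langle u\,|\,\div v\rangle|\leq \|u\|_2\,\|\div v\|_2\leq c\,\|u\|_2\,\|v\|_{1,2}.
$$
By density of $\mathcal{D}(\M,V^{\sigma+1}_{\tau})$ in $W^1_2(\M,V^{\sigma+1}_{\tau})$ (built into the definition of $W^1_2$), this inequality extends by continuity, so $\gd u$ defines a bounded linear functional on $W^1_2(\M,V^{\sigma+1}_{\tau})$ of norm at most $c\|u\|_2$, i.e. $\gd u\in W^{-1}_2(\M,V^{\sigma+1}_{\tau})$ with continuous dependence on $u$. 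The same argument, with the roles of $\gd$ and $\div$ swapped and using \eqref{S2: div-thm-unweighted-2} in place of \eqref{S2: div-thm-unweighted-1}, proves $\div\in\L(L_2(\M,V^{\sigma+1}_{\tau}),W^{-1}_2(\M,V^{\sigma}_{\tau}))$.

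There is no substantial obstacle here: the content is essentially bookkeeping on tensor indices combined with Hölder's inequality and density of $\mathcal{D}$. The only point requiring a little care is ensuring that in each of the four cases the fibre-type indices match correctly, so that \eqref{S2: div-thm-unweighted-1}--\eqref{S2: div-thm-unweighted-2} are being applied to admissible test sections and that the dual pairing $\langle\cdot\,|\,\cdot\rangle$ is the appropriate one on $V^{\sigma+1}_{\tau}$ or $V^{\sigma-1}_{\tau}$.
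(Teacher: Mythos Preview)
The paper does not supply a proof for this proposition; it is stated and then the section ends. Your argument is correct and is exactly the standard one: the $k=1$ case follows because raising an index by $g^{-1}$ is a fibrewise isometry (so $|\gd u|_g=|\nabla u|_g$) and contraction is a bounded fibrewise map (so $|\div u|_g\le c\,|\nabla u|_g$), while the $k=0$ case is the duality computation you wrote, using \eqref{S2: div-thm-unweighted-1}--\eqref{S2: div-thm-unweighted-2} and the density of $\mathcal{D}$ in $W^1_2$. Nothing is missing.
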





\section{\bf Poincar\'e inequality on singular manifolds}

In this section, we will prove a Poincar\'e inequality for {\em singular manifolds} with wedge ends and some kind of removable singularities, which is of particular importance in obtaining further stability estimates and maximal regularity results for a class of differential operators on {\em singular manifolds}.

Let $J:=(0,1]$.
Following \cite{Ama14}, we denote by $\mathscr{C}(J)$ the set of all $R\in C^\infty(J,(0,\infty))$ with 
\begin{align*}
\begin{cases}
\text{(i)} \quad & R(1)=1 \text{ and }R(0):=\lim\limits_{t\to 0} R(t)=0;\\
\text{(ii)}  & \int_J dt/R(t)=\infty;\\
\text{(iii)}  & 	\|\partial^k_t R\|_{\infty} <\infty,\quad k\geq 1.
\end{cases}
\end{align*}
The elements in $\mathscr{C}(J)$ are called {\em cusp characteristics} on $J$.
If $R$ further satisfies
$$
\text{(iv)} 
\dot{R}\sim {\bf 1}_J,
\quad
|\ddot{R}| <\infty,
$$
then we call it a {\em uniformly mild cusp characteristic}.
We write $R\in \mathscr{C_U}(J)$.

We will quote several lemmas from \cite{Ama14}, which serve as the cornerstones of the construction of {\em singular manifolds} of wedge ends.
\begin{lem}
\label{S4: lem 4.1}
{\cite[Theorem~3.1]{Ama14}}
Suppose that $\rho$ is a bounded singularity function on $(\M,g)$, and $\tilde{\rho}$ is one for $(\tilde{\M},\tilde{g})$. Then $\rho\otimes\tilde{\rho}$ is a singularity function for $(\M\times\tilde{\M}, g+\tilde{g})$.
\end{lem}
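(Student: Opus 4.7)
The plan is to build, from atlases $\A=(\Ok,\vpk)_{\kappa\in\K}$ on $\M$ and $\tilde{\A}=(\tilde{\mathsf{O}}_{\tilde{\kappa}},\tilde{\varphi}_{\tilde{\kappa}})_{\tilde{\kappa}\in\tilde{\K}}$ on $\tilde{\M}$ realizing the two given singularity data, a normalized atlas $\fB$ on $\M\times\tilde{\M}$ for which $(\rho\otimes\tilde{\rho},\fB)$ fulfils (S1)--(S4). The naive choice, product charts $\vpk\otimes\tilde{\varphi}_{\tilde{\kappa}}$ on $\Ok\times\tilde{\mathsf{O}}_{\tilde{\kappa}}$, fails (S1): by (R3) applied on each factor the pullback of $g+\tilde{g}$ in such a chart is comparable to the anisotropic metric $\rho_{\kappa}^{2}\,g_{m}\oplus\tilde{\rho}_{\tilde{\kappa}}^{2}\,g_{\tilde{m}}$, whereas (S1) with singularity function $\rho\otimes\tilde{\rho}$ demands comparability to the isotropic $(\rho_{\kappa}\tilde{\rho}_{\tilde{\kappa}})^{2}\,g_{m+\tilde{m}}$. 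The two agree only when $\rho_{\kappa}$ and $\tilde{\rho}_{\tilde{\kappa}}$ are of the same order of magnitude, which cannot be arranged in general.

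To repair this I would post-compose each product chart with the linear diffeomorphism $D_{\kappa,\tilde{\kappa}}:=\mathrm{diag}(\tilde{\rho}_{\tilde{\kappa}}^{-1}I_{m},\,\rho_{\kappa}^{-1}I_{\tilde{m}})$ of $\R^{m+\tilde{m}}$. Pulling $\rho_{\kappa}^{2}g_{m}\oplus\tilde{\rho}_{\tilde{\kappa}}^{2}g_{\tilde{m}}$ back by $D_{\kappa,\tilde{\kappa}}^{-1}$ multiplies the first $m$ components by $\tilde{\rho}_{\tilde{\kappa}}^{2}$ and the last $\tilde{m}$ components by $\rho_{\kappa}^{2}$, producing exactly $(\rho_{\kappa}\tilde{\rho}_{\tilde{\kappa}})^{2}g_{m+\tilde{m}}$, the isotropic scaling required by (S1). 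The image under $D_{\kappa,\tilde{\kappa}}$ of the product chart domain $\vpk(\Ok)\times\tilde{\varphi}_{\tilde{\kappa}}(\tilde{\mathsf{O}}_{\tilde{\kappa}})$ is a cuboid of edge lengths $\tilde{\rho}_{\tilde{\kappa}}^{-1}$ and $\rho_{\kappa}^{-1}$, each at least $1$ by the hypothesis $\rho,\tilde{\rho}\leq 1$; I would cover this cuboid by at most $\lceil\tilde{\rho}_{\tilde{\kappa}}^{-1}\rceil^{m}\lceil\rho_{\kappa}^{-1}\rceil^{\tilde{m}}$ unit subcubes and, after translating each to the standard unit cube in $\R^{m+\tilde{m}}$, take its preimage under $D_{\kappa,\tilde{\kappa}}^{-1}\circ(\vpk,\tilde{\varphi}_{\tilde{\kappa}})^{-1}$ as a chart of the normalized atlas $\fB$.

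The remaining conditions are routine. Finite multiplicity of $\fB$ follows from that of $\A$ and $\tilde{\A}$ together with the essentially disjoint tiling, since at most $c(m+\tilde{m})$ tiles inside a given $\Ok\times\tilde{\mathsf{O}}_{\tilde{\kappa}}$ share a common point; uniform shrinkability is inherited analogously. Condition (R2) for $\fB$ reduces to (R2) on each factor plus the observation that a transition between two refined charts is the block-diagonal composition of $\varphi_{\eta}\circ\psk$ and $\tilde{\varphi}_{\tilde{\eta}}\circ\tilde{\psi}_{\tilde{\kappa}}$ with the linear conjugation $D_{\eta,\tilde{\eta}}\circ D_{\kappa,\tilde{\kappa}}^{-1}=\mathrm{diag}((\tilde{\rho}_{\tilde{\kappa}}/\tilde{\rho}_{\tilde{\eta}})I_{m},\,(\rho_{\kappa}/\rho_{\eta})I_{\tilde{m}})$, whose entries and inverses are uniformly bounded by (S4) applied on overlapping patches of each factor. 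Condition (S3) follows by Leibniz: the $\fB$-pullback of $\rho\otimes\tilde{\rho}$ splits as a tensor product of the two single-factor pullbacks, each differentiated direction contributes only a bounded rescaling factor $\tilde{\rho}_{\tilde{\kappa}}\leq 1$ or $\rho_{\kappa}\leq 1$, and (S3) on each factor supplies the required $C^{k}$-bound with constant $\rho_{\kappa}\tilde{\rho}_{\tilde{\kappa}}$. Finally, (S4) is immediate from (S4) on the factors and the product form of $\rho\otimes\tilde{\rho}$. The one genuine obstacle is the anisotropic-to-isotropic mismatch resolved in the second paragraph; the boundedness of $\rho$ (together with the standing $\tilde{\rho}\leq 1$) is used precisely to keep the per-patch tile count finite, so that $\fB$ remains locally finite.
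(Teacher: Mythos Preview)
The paper does not supply its own proof of this lemma; it is quoted directly from \cite[Theorem~3.1]{Ama14} and invoked as a black box. Your outline matches the argument in Amann's paper: the essential obstruction is exactly the anisotropy you identify, the fix is the block rescaling $D_{\kappa,\tilde\kappa}$ followed by subdivision of the resulting large cuboid into unit cubes, and the boundedness of $\rho$ and $\tilde\rho$ is used precisely to guarantee that the rescaled cuboid has all edge lengths $\geq 1$ so that the tiling is well defined and locally finite.

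Two small remarks on your sketch. First, the transition between refined charts is not literally a linear conjugation by $D_{\eta,\tilde\eta}\,D_{\kappa,\tilde\kappa}^{-1}$: the factor transitions $\varphi_\eta\circ\psk$ and $\tilde\varphi_{\tilde\eta}\circ\tilde\psi_{\tilde\kappa}$ sit between $D_{\eta,\tilde\eta}$ and $D_{\kappa,\tilde\kappa}^{-1}$, so the $k$-th derivative of the first block picks up a factor $\tilde\rho_{\tilde\eta}^{-1}\tilde\rho_{\tilde\kappa}^{\,k}$ rather than just the ratio $\tilde\rho_{\tilde\kappa}/\tilde\rho_{\tilde\eta}$. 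This is still bounded because $\tilde\rho_{\tilde\kappa}\leq 1$, but the argument uses the boundedness hypothesis here as well as in the tiling step. Second, you do not mention (R4) for $(g+\tilde g)/(\rho\otimes\tilde\rho)^2$; it is checked by the same mechanism (chain rule plus $\rho_\kappa,\tilde\rho_{\tilde\kappa}\leq 1$) and poses no new difficulty, but strictly speaking it is part of (S1).
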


\begin{lem}
\label{S4: lem 4.2}
{\cite[Lemma~3.4]{Ama14}}
Let $f: \tilde{\M}\rightarrow\M$ be a diffeomorphism of manifolds. Suppose that $(\M,g;\rho)$ is a singular manifold. Then so is
$(\tilde{\M},f^*g;f^*\rho)$.
\end{lem}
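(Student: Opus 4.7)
The plan is to transport the given singularity datum from $\M$ to $\tilde{\M}$ by pulling back every ingredient (atlas, metric, weight function) via $f$, and then to check the axioms (R1)--(R4) and (S1)--(S4) by observing that the diffeomorphism cancels in every local expression. So there is no genuine analytic content beyond the naturality of the structure under smooth change of the base manifold.

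Concretely, I would fix a singularity datum $(\rho,\A)$ for $(\M,g)$ with $\A=(\Ok,\vpk)_{\kappa\in\K}$ and put $\tilde{\Ok}:=f^{-1}(\Ok)$, $\tilde{\vpk}:=\vpk\circ f|_{\tilde{\Ok}}$, so that $\tilde{\psk}=f^{-1}\circ\psk$. Uniform shrinkability and finite multiplicity transfer immediately because $f$ is a bijection: $\tilde{\mathsf{O}}_{\tilde\kappa}\cap\tilde{\Ok}=f^{-1}(\mathsf{O}_{\tilde\kappa}\cap\Ok)$, and $\{\tilde\psk(r\Q)\}$ covers $\tilde{\M}$ exactly when $\{\psk(r\Q)\}$ covers $\M$. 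The key observation is the cancellation $f\circ\tilde\psk=\psk$, which yields
$$\tilde{\varphi}_{\tilde\kappa}\circ\tilde\psk=\varphi_{\tilde\kappa}\circ\psk,\qquad \tilde\psk^{\,*}(f^{*}g)=\psk^{*}g,\qquad \tilde\psk^{\,*}(f^{*}\rho)=\psk^{*}\rho.$$
Hence the transition maps, the local pullback metric, and the local pullback weight coincide pointwise with those of the original datum. Consequently (R2)--(R4) and (S3) are inherited verbatim, with the same constants $c(k)$. Axiom (S4) follows from $(f^{*}\rho)(\tilde{\p})=\rho(f(\tilde{\p}))$ and $(f^{*}\rho)_\kappa=\rho(f\circ\tilde\psk(0))=\rho(\psk(0))=\rho_\kappa$, which reduces the estimate on $\tilde{\Ok}$ to the same estimate for $\rho$ on $\Ok$. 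For (S1), I would note that $f^{*}g/(f^{*}\rho)^{2}=f^{*}(g/\rho^{2})$ and apply the same identities with $g/\rho^{2}$ in place of $g$, using that $(\M,g/\rho^{2})$ is already a uniformly regular Riemannian manifold by hypothesis.

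The main ``obstacle'' is purely bookkeeping: one must keep track of what is being pulled back by $f$ versus by $\psk$, and verify that the axiom constants in (R1)--(R4) and (S1)--(S4) on $\tilde{\M}$ can be taken to be precisely the ones already available for $(\M,g;\rho)$. Once the cancellation $f\circ\tilde\psk=\psk$ is in hand, each verification reduces to a one-line identity, so the result follows with essentially no additional work.
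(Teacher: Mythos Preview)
Your proposal is correct and follows the natural route: pull back the atlas via $f$, observe the cancellation $f\circ\tilde\psk=\psk$, and read off (R1)--(R4) and (S1)--(S4) with the same constants. Note, however, that the paper does not supply its own proof of this lemma at all --- it is simply quoted from \cite[Lemma~3.4]{Ama14} --- so there is no argument in the present paper to compare against; your sketch is essentially what the proof in Amann's paper amounts to.
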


\begin{lem}
\label{S4: lem 4.3}
{\cite[Lemma~5.2]{Ama14}}
Suppose that $R\in \mathscr{C}(J)$. Then $R$ is a singularity function for $(J, dt^2)$.
\end{lem}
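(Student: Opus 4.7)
The strategy is to straighten $R$ via the intrinsic coordinate associated with the rescaled metric $dt^2/R^2$, thereby reducing the problem to the trivial singularity structure on $[0,\infty)$. Set
$$
\phi(t) := \int_t^1 \frac{ds}{R(s)},\qquad t \in J.
$$
Conditions (i) and (ii) guarantee that $\phi$ is a smooth diffeomorphism from $J$ onto $[0,\infty)$ with $\phi'(t) = -1/R(t)$; setting $\rho(s) := R(\phi^{-1}(s))$, one has $(\phi^{-1})'(s) = -\rho(s)$, so the pullback of $dt^2/R^2$ under $\phi^{-1}$ is the flat metric $ds^2$. Also, $\rho$ is bounded on $[0,\infty)$, since $R(0)=0$, $R(1)=1$ and $\|R'\|_{\infty}<\infty$ force $R\leq 1+\|R'\|_\infty$.

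Next, I construct an atlas $\A = (\Ok, \vpk)_{\kappa \in \Nz}$ by placing unit cubes in the $\phi$-coordinate along the integers: put $\vpk(t) := \phi(t) - \kappa$ on $\Ok := \phi^{-1}\bigl((\kappa + \mathsf{Q}^1) \cap [0,\infty)\bigr)$. The cover is uniformly shrinkable with multiplicity at most $2$, and the transition maps $\varphi_\eta \circ \psk$ are translations of $\R$, so (R2) is automatic. By construction $\psk^*(g/R^2) = dx^2$ in every chart, hence (R3)-(R4) hold trivially for the rescaled metric. This simultaneously verifies (S1) and (S2).

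Set $R_\kappa := R(\psk(0)) = \rho(\kappa)$. For (S4), I use $(\log \rho)'(s) = -R'(\phi^{-1}(s))$, which is bounded by $\|R'\|_\infty$ thanks to (iii); integrating over each interval $\kappa + \mathsf{Q}^1$ shows $\rho(s)/\rho(\kappa)$ is bounded above and below uniformly in $\kappa$. For (S3) I would prove by induction that $|\rho^{(k)}(s)| \leq c(k)\rho(s)$ uniformly in $s$, starting from the linear ODE $\rho' = -(R'\circ \phi^{-1})\,\rho$: repeated differentiation together with $(\phi^{-1})' = -\rho$ expresses $\rho^{(k)}/\rho$ as a universal polynomial in the quantities $R^{(j)}\circ\phi^{-1}$ for $j\leq k$ and in $\rho$ itself, all bounded by (iii) and the boundedness of $\rho$. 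Combined with (S4), this yields the required estimate $\|\partial^k_x(R \circ \psk)\|_\infty \leq c(k) R_\kappa$ uniformly in $\kappa$.

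The main obstacle is the inductive bookkeeping in (S3): one must control how derivatives of $\rho$ accumulate factors of $R^{(j)}$ and powers of $\rho$, and confirm that the resulting expression remains $O(\rho)$. This is exactly where the full strength of (iii), applied to every $k\geq 1$, is needed; everything else follows from the change of variable $\phi$ and the elementary covering of $[0,\infty)$ by unit intervals.
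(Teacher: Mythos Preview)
The paper does not supply its own proof of this lemma; it is quoted verbatim from \cite[Lemma~5.2]{Ama14} and used as a black box. Consequently there is nothing in the present paper to compare your argument against.

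That said, your approach is the natural one and is essentially what Amann does: the substitution $\phi(t)=\int_t^1 ds/R(s)$ is precisely the arc-length parameter of the rescaled metric $dt^2/R^2$, so it flattens $(J,dt^2/R^2)$ onto $([0,\infty),ds^2)$, after which (S1)--(S2) are immediate from the obvious integer-translate atlas on the half-line. Your verification of (S4) via $(\log\rho)'=-R'\circ\phi^{-1}$ and of (S3) via the recursion $\rho'=-(R'\circ\phi^{-1})\rho$ is correct; the inductive structure you describe --- each $\rho^{(k)}/\rho$ being a polynomial in $\rho$ and in $R^{(j)}\circ\phi^{-1}$, $1\le j\le k$ --- is exactly right and uses (iii) at every order. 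One small point: the chart at $\kappa=0$ is a boundary chart (since $t=1\in J$ maps to $s=0$), so strictly speaking the normalized atlas convention requires a half-cube there; this is handled in Amann's framework for manifolds with boundary and does not affect the estimates.
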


Assume that $(B,g_B)$ is a $d$-dimensional compact closed submanifold of $\R^{\bar{d}}$, and $R\in \mathscr{C}(J)$. We define the (model) $(R,B)$-cusp $P(R,B)$ on $J$, also called $R$-cusp over $B$ on $J$, by 
$$P(R,B)=P(R,B;J):=\{(t,R(t)y):\, t\in J, \, y\in B\}\subset \R^{1+\bar{d}} .$$
It is a $(1+d)$-dimensional submanifold of $\R^{1+\bar{d}}$. The map
$$\phi_P=\phi_P(R):P \rightarrow J\times B: \quad (t,R(t)y)\rightarrow (t,y) $$
is a diffeomorphism, the {\em canonical stretching diffeomorphism} of $P$.

Based on the above three lemmas, we can easily show that
\begin{lem}
$(P(R,B), \phi_P^*(dt^2 + g_B); \phi_P^*(R\otimes {\bf 1}_B))$ is a singular manifold.
\end{lem}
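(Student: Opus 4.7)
The plan is to build the singular manifold structure on $P(R,B)$ in three successive steps, each invoking one of the preceding lemmas in the excerpt, and then transferring the result through the stretching diffeomorphism.

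First I would handle the two factor manifolds separately. Since $B$ is a compact closed submanifold of $\R^{\bar d}$, it is automatically a \emph{uniformly regular Riemannian manifold} (finitely many coordinate charts with uniformly controlled transitions), and hence the constant function ${\bf 1}_B$ is a bounded singularity function for $(B,g_B)$. By Lemma~\ref{S4: lem 4.3}, $R\in\mathscr{C}(J)$ is a singularity function for $(J,dt^2)$; moreover, since $R(0)=0$, $R(1)=1$ and $\|\dot R\|_{\infty}<\infty$ (from (iii) with $k=1$), $R$ is bounded on $J$. Both singularity functions are therefore bounded as required.

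Second I would apply Lemma~\ref{S4: lem 4.1} to the product: because both $R$ and ${\bf 1}_B$ are bounded singularity functions on their respective factors, $R\otimes{\bf 1}_B$ is a singularity function for $(J\times B,\,dt^2+g_B)$, so $(J\times B,\,dt^2+g_B;\,R\otimes{\bf 1}_B)$ is a singular manifold.

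Finally I would transfer this structure to $P(R,B)$ via the canonical stretching diffeomorphism. The map $\phi_P:P(R,B)\to J\times B$, $(t,R(t)y)\mapsto(t,y)$, is a diffeomorphism between $(1+d)$-dimensional manifolds. Applying Lemma~\ref{S4: lem 4.2} with $f=\phi_P$, $\M=J\times B$ and $\tilde\M=P(R,B)$ yields that $(P(R,B),\,\phi_P^*(dt^2+g_B);\,\phi_P^*(R\otimes{\bf 1}_B))$ is a singular manifold, which is the claim.

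There is no real obstacle: the lemma is essentially a bookkeeping statement that composes the three earlier results. The only point that deserves a sentence of care is verifying boundedness of the two factor singularity functions (to meet the hypothesis of Lemma~\ref{S4: lem 4.1}), which is immediate from compactness of $B$ and from conditions (i) and (iii) defining $\mathscr{C}(J)$.
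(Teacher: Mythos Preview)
Your proposal is correct and follows precisely the route the paper intends: the paper merely states that the lemma follows ``based on the above three lemmas,'' and your argument spells out exactly that composition (Lemma~\ref{S4: lem 4.3} for the $J$-factor, compactness of $B$ for the ${\bf 1}_B$-factor, Lemma~\ref{S4: lem 4.1} for the product, then Lemma~\ref{S4: lem 4.2} for the pullback along $\phi_P$). Your extra sentence verifying boundedness of $R$ and ${\bf 1}_B$ to meet the hypothesis of Lemma~\ref{S4: lem 4.1} is the only nontrivial check, and you handle it correctly.
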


Assume that $(\Gamma, g_\Gamma)$ is a compact connected Riemannian manifold without boundary. Then the (model) $\Gamma$-wedge over the $(R,B)$-cusp, $P(R,B)$, is defined by
$$W=W(R,B,\Gamma):=P(R,B)\times\Gamma.$$
If $\Gamma$ is a one-point space, then $W$ can be naturally identified with $P$. Thus every cusp is also a wedge.

The following result is an immediate conclusion from Lemmas~\ref{S4: lem 4.1}-\ref{S4: lem 4.3}.
\begin{lem}
\label{S4: wedge}
$(W(R,B,\Gamma), \phi_P^*(dt^2 + g_B)+g_\Gamma; \phi_P^*(R\otimes {\bf 1}_B)\otimes {\bf 1}_\Gamma)$ is a singular manifold.
\end{lem}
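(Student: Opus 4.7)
The plan is to assemble the wedge from its two factors in the obvious order, invoking the product lemma once the cusp factor has been established. Concretely, I would argue as follows.

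First, I would rely on the immediately preceding lemma (call it Lemma 3.4 in context), which already tells us that
\[
\bigl(P(R,B),\ \phi_P^*(dt^2+g_B);\ \phi_P^*(R\otimes\mathbf{1}_B)\bigr)
\]
is a singular manifold. Note that to apply Lemma~\ref{S4: lem 4.1} in the next step, the singularity function on the $P$-factor must be bounded; this holds because $R\in\mathscr{C}(J)$ is continuous on $J=(0,1]$ with $R(0)=0$ and $\|\dot R\|_\infty<\infty$, hence $R\leq C$ on $J$, and $\phi_P^*(R\otimes\mathbf{1}_B)=R\circ\mathrm{pr}_J$ under the stretching diffeomorphism, which inherits the same bound. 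The hypothesis $\rho\leq 1$ in the global standing assumptions is harmless here because we can rescale by a constant without affecting the singularity structure (cf.\ the equivalence relations (E1)--(E3)).

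Second, I would treat the $\Gamma$-factor. Since $(\Gamma,g_\Gamma)$ is compact without boundary, it admits a finite atlas satisfying (R1)--(R4), so it is a uniformly regular Riemannian manifold; equivalently, $\mathbf{1}_\Gamma$ is a singularity function for $(\Gamma,g_\Gamma)$. This singularity function is trivially bounded.

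Third, I would apply Lemma~\ref{S4: lem 4.1} to the two singular manifolds
\[
(P,\ \phi_P^*(dt^2+g_B);\ \phi_P^*(R\otimes\mathbf{1}_B))\quad\text{and}\quad (\Gamma,g_\Gamma;\mathbf{1}_\Gamma).
\]
The product rule yields that
\[
\bigl(P\times\Gamma,\ \phi_P^*(dt^2+g_B)+g_\Gamma;\ \phi_P^*(R\otimes\mathbf{1}_B)\otimes\mathbf{1}_\Gamma\bigr)
\]
is a singular manifold, which is precisely $W(R,B,\Gamma)$ with the claimed metric and singularity function.

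There is no real obstacle here: the work has already been absorbed into Lemmas~\ref{S4: lem 4.1}--\ref{S4: lem 4.3} and the preceding cusp lemma. The only minor technical point is checking the boundedness hypothesis of Lemma~\ref{S4: lem 4.1} for the cusp factor, which I would note explicitly as above. Everything else is a direct concatenation of the product and diffeomorphism invariance lemmas.
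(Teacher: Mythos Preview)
Your argument is correct and follows essentially the same route as the paper, which simply records the lemma as an immediate consequence of Lemmas~\ref{S4: lem 4.1}--\ref{S4: lem 4.3}; you have merely spelled out the intermediate step through the cusp lemma and the boundedness check for the singularity function, together with the observation that compactness of $\Gamma$ makes $(\Gamma,g_\Gamma;\mathbf{1}_\Gamma)$ uniformly regular.
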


Suppose that $f: \M\to W(R,B,\Gamma)$. Then $(\M, f^*(\phi_P^*(dt^2 + g_B)+g_\Gamma )  ; f^*(\phi_P^*(R\otimes {\bf 1}_B)\otimes {\bf 1}_\Gamma))$ is called a $\Gamma$-wedge over the $(R,B)$-cusp, and by Lemma~\ref{S4: lem 4.2} is a {\em singular manifold}.

Given any compact submanifold $\Sigma\subset (\M,g)$, the distance function 
is a well-defined smooth function in a collar neighborhood $\mathscr{U}_\Sigma$ of $\Sigma$ (excluding $\Sigma$ itself). The distance ball at $\Sigma$ with radius $r$ is defined by 
$$\B_\M(\Sigma,r):= \{\p\in \M: {\rm dist}_\M(\p,\Sigma)<r \}. $$
\begin{definition}
\label{S4: Torn mfd}
\begin{itemize}
\item[]
\item[(i)] Suppose that $(\sM,g)$ is an $m$-dimensional uniformly regular Riemannian manifold, and $\boldsymbol{\Sigma}=\{\Sigma_j:j=1,\cdots,k\}$ is a finite set of disjoint compact connected submanifolds of codimension $1$  such that $\Sigma_j\subset \mathring{\sM}$. Put $V:=\sM\setminus \cup_{j=1}^k \Sigma_j$ and 
$$\mathscr{B}_{j,r}:= \bar{\B}_\sM(\Sigma_j,r)\cap V,\quad j=1,\cdots,k.$$ 
Let $\d_j={\rm dist}_{\sM}(\cdot, \Sigma_j).$
Furthermore, the singularity function $\rho$ satisfies 
\begin{align}
\label{S5.3: near bdry}
\rho = \d_j^{\beta_j} \quad\text{in } \mathscr{B}_{j,r} 
\end{align}
for some $r\in (0,\delta)$ and $\beta_j\geq 1$, where $\delta< {\rm diam}(\sM)$ and $\mathscr{B}_{i,\delta} \cap \mathscr{B}_{j,\delta}=\emptyset$ for $i\neq j$, and
$$\rho\sim {\bf 1},\quad \text{elsewhere on }V. $$
\item[(ii)] ${\bf W}=\{W_1,\cdots,W_n\}$ is a finite set of disjoint $m$-dimensional wedges.
More precisely, there is a diffeomorphism $f_i: W_i \to W(R_i,B_i,\Gamma_i)$ with $R_i\in \mathscr{C}(J)$.
Let $I_r:=(0,r]$ and
$$\mathscr{G}_{i,r}:=f_i^{-1} (\phi_P(I_r\times B_i)\times\Gamma_i),\quad i=1,\cdots,n.$$
Moreover, the singularity function $\rho$ satisfies 
\begin{align}
\label{S5.3: on wedge end}
\rho = f_i^*(\phi_P^*(R_i|_{I_r} \otimes {\bf 1}_{B_i})\otimes {\bf 1}_{\Gamma_i}) \quad \text{in } \mathscr{G}_{j,r}
\end{align}
for some $r\in (0,1]$, and
$$\rho\sim {\bf 1},\quad \text{elsewhere on } W_i. $$
\item[(iii)] $\{V\}\cup {\bf W}$ forms a covering for $\M$, and $ V\cap W_i \subset \partial V \cap \partial W_i$.
\item[(iv) ] $(\M,g)$ satisfies that $g|_{W_i}= f^*(\phi_{P_i}^*(dt^2 + g_{B_i})+g_{\Gamma_i}), $
where $\phi_{P_i}$ is the canonical stretching diffeomorphism of $P_i(R_i,B_i;J)$.
\end{itemize}
If $(\M,g;\rho)$ satisfies conditions (i)-(iv), then it is called a {\em singular manifold  with  wedge ends and $\boldsymbol{\beta}$-removable singularities}, where ${\boldsymbol{\beta}}=(\beta_1,\cdots,\beta_k)$. 
If in addition, we assume that $(\sM,g)$ is compact, then we call $(\M,g;\rho)$ a {\em compact singular manifold  with  wedge ends and $\boldsymbol{\beta}$-removable singularities}. 
If we further assume that all the $R_i\in \mathscr{C_U}(J)$, then we call  $(\M,g;\rho)$ a {\em (compact)  singular manifold  with uniformly mild wedge ends and  $\boldsymbol{\beta}$-removable singularities}.
We denote $\boldsymbol{\beta}$ by $\boldsymbol{1}$ if $\boldsymbol{\beta}=(1,\cdots,1)$.
\end{definition}
\begin{remark}
\begin{itemize}
\item[]
\item[(i)]  It follows from \cite[Theorem~1.6]{Ama14} that $(\M,g;\rho)$ is a {\em singular manifold}.
\item[(ii) ] The condition $ V\cap W_i \subset \partial V \cap \partial W_i$ in the above definition is not necessary. We impose this condition only for the sake of computational simplicity.
\end{itemize}
\end{remark}

\begin{theorem}
\label{S4: Poincare ineq}
Suppose that $(\M,g;\rho)$ is a $C^1$-compact singular manifold  with wedge ends and $\boldsymbol{\beta}$-removable singularities.
Then every $u\in W^1_2(\M)$ satisfies 
$$ \|u\|_2\leq C\| |\nabla u|_g \|_2 $$
for some fixed $C>0$. 
\end{theorem}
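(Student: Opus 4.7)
My approach is a compactness-plus-contradiction argument. I will combine a compact embedding $W^1_2(\M)\hookrightarrow L_2(\M)$ with a one-dimensional Dirichlet Poincar\'e inequality at each wedge end; the latter exploits the product-cylinder structure imposed by condition~(iv) in Definition~\ref{S4: Torn mfd} together with the fact that every $\phi\in\mathcal{D}(\M)$ must vanish in a neighborhood of each excluded cusp tip and of each removable singularity $\Sigma_j$.

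\textbf{Compact embedding.} Cover $\M$ by $V$ and the wedges $W_1,\ldots,W_n$. On $V=\sM\setminus\bigcup_j\Sigma_j$ the $\Sigma_j$ are codimension-one compact submanifolds of the \emph{compact} uniformly regular Riemannian manifold $\sM$, so any $\phi\in\mathcal{D}(\M)$ extends by zero to an element of $W^1_2(\sM)$ of the same norm; standard Rellich--Kondrachov on the compact URR manifold $\sM$ then gives compactness of the embedding into $L_2(\sM)=L_2(V)$. On each wedge $W_i$, condition~(iv) together with the diffeomorphisms $f_i$ and $\phi_{P_i}$ furnishes an isometry onto the finite product $(0,1]_t\times B_i\times\Gamma_i$ endowed with the metric $dt^2+g_{B_i}+g_{\Gamma_i}$, whose closure is a compact manifold with boundary where Rellich applies. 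A partition of unity subordinate to $\{V,W_1,\ldots,W_n\}$ patches these local compact embeddings into the desired global one.

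\textbf{Contradiction.} Suppose the inequality fails; then one extracts $u_n\in W^1_2(\M)$ with $\|u_n\|_2=1$ and $\||\nabla u_n|_g\|_2\to 0$. By the compact embedding and reflexivity, along a subsequence $u_n\to u$ strongly in $L_2(\M)$ and weakly in $W^1_2(\M)$, so $\|u\|_2=1$ and $|\nabla u|_g\equiv 0$ by weak lower semicontinuity. Connectedness of $\M$ gives $u\equiv c$ for some constant $c\neq 0$. Since $c\in W^1_2(\M)=\overline{\mathcal{D}(\M)}^{\|\cdot\|_{1,2}}$, pick $\phi_n\in\mathcal{D}(\M)$ with $\phi_n\to c$ in $W^1_2(\M)$. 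On any wedge $W_i$, after transporting via the isometry, $\phi_n$ vanishes on $\{t<\varepsilon_n\}\times B_i\times\Gamma_i$ for some $\varepsilon_n>0$. The one-dimensional Dirichlet inequality
\[
\int_0^1|\psi(t)|^2\,dt\leq \tfrac{1}{2}\int_0^1|\psi'(t)|^2\,dt,\qquad \psi(0)=0,
\]
applied slicewise in $(y,z)\in B_i\times\Gamma_i$ and integrated via Fubini, yields $\|\phi_n\|_{L_2(W_i)}^2\leq \tfrac{1}{2}\||\nabla\phi_n|_g\|_{L_2(W_i)}^2\to 0$, while $\phi_n\to c$ in $L_2(W_i)$. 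Hence $c=0$, contradicting $\|u\|_2=1$. (If no wedge is present, the analogous one-dimensional argument in a tubular neighborhood of some $\Sigma_j$ in terms of the normal coordinate $\d_j$ closes the case, using \eqref{S5.3: near bdry}.)

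\textbf{Main obstacle.} The technical crux is the compact embedding: one must simultaneously handle the codimension-one removable singularities (via zero-extension into the compact URR manifold $\sM$ followed by Rellich on $\sM$) and the wedge ends (whose compactness becomes transparent only after unfolding by $f_i\circ\phi_{P_i}$ into finite product cylinders via condition~(iv)). The cusp-characteristic condition $\int_J dt/R(t)=\infty$ plays no role here because the unweighted metric on the wedge is already a finite product; the singularity function $R$ matters only for the weighted theory. Once the compact embedding is secured, the argument reduces to a trace-type observation disguised as the one-dimensional Dirichlet Poincar\'e inequality.
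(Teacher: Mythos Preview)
Your argument is correct and rests on the same three ingredients as the paper's proof: Rellich--Kondrachov compactness on the compact closures of the pieces, a contradiction producing a nonzero (locally) constant limit, and the zero-trace condition at the singular ends (wedge tips or $\Sigma_j$) to kill that constant. The organization differs: the paper proves Poincar\'e separately on each piece $S\in\{V,W_1,\dots,W_n\}$ via its own compactness-plus-contradiction argument, and then glues using the additivity $\|u\|_{L_2(\M)}^2=\|u\|_{L_2(V)}^2+\sum_i\|u\|_{L_2(W_i)}^2$ (valid since the pieces overlap only on measure-zero boundaries). You instead assemble a global compact embedding first and run a single contradiction, invoking the one-dimensional Dirichlet inequality on just one wedge end to exclude constants. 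Your route is slightly more economical; the paper's is more modular.

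Two small points. First, your partition-of-unity remark is not quite right: by condition~(iii), $V\cap W_i\subset\partial V\cap\partial W_i$, so the cover has no open overlaps and an ordinary partition of unity subordinate to it does not exist. This is harmless, since you can argue compactness directly by restricting a bounded sequence to each piece, applying Rellich on the compact closure of each, and passing to a diagonal subsequence; the $L_2(\M)$-norm decomposes additively over the essentially disjoint pieces, which is exactly the mechanism the paper uses in its gluing step. Second, you assume $\M$ is connected to conclude $u\equiv c$; the paper allows disconnected $\M$, but your argument adapts componentwise (each component must carry at least one wedge end or one $\Sigma_j$, else Poincar\'e would genuinely fail there).
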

\begin{proof}
Our plan to prove this Poincar\'e inequality is as follows. We will prove our claim first for wedge type manifolds and compact manifolds of type $V$ in part (i) of Definition~\ref{S4: Torn mfd} separately, and then we will glue together the obtained inequalities.

(i) Assume that $R(t)\in \mathscr{C}(J)$. Then by Lemma~\ref{S4: lem 4.1}, $(J\times B, dt^2+ g_B; R\otimes {\bf 1}_B)$ is a {\em singular manifold}. Without loss of generality, we assume that $B$ is connected.
 
Let $B_0:= \{0\}\times B$ and $B_1:=\{1\}\times B.$
Consider the compact manifold $(\bar{J}\times B, dt^2 +g_B)$ with boundary $B_0\cup B_1$, which is of class $C^1$. 
Thus the Rellich-Kondrachov embedding theorem is known to hold on $(\bar{J}\times B, g_C)$, where $g_C=dt^2 +g_B$. More precisely, the embedding 
\begin{align}
\label{S4: cpt emb-1}
W^1_2(\bar{J}\times B)  \hookrightarrow L_2(\bar{J}\times
B)
\end{align}
is compact. Note that by definition, we have
\begin{align}
\label{S4: emb-2}
W^1_2(J\times B) \doteq \zW^1_2(\bar{J}\times B) \hookrightarrow W^1_2(\bar{J}\times B),
\end{align}
where $\zW^1_2(\bar{J}\times B)$ is the closure of $\mathcal{D}(\M)$ in $W^1_2(\bar{J}\times B)$.

It follows from the standard trace theorem that  $\gamma_{B_0}\in \L(W^1_2(\bar{J}\times B), W^{1/2}_2(B_0))$. By the density of $\mathcal{D}(\M)$ in $\zW^1_2(\bar{J}\times B)$ and the fact that 
$$\gamma_{B_0}(u)=0,\quad u\in\mathcal{D}(\M), $$ we immediately infer that
$$\gamma_{B_0}(u)=0,\quad u\in \zW^1_2(\bar{J}\times B).  $$
Therefore, any function $u\in W^1_2(J\times B) $  actually admits a vanishing trace on $B_0$.

(ii) Next we will prove that the Poincar\'e inequality holds for $W^1_2(J\times B)$, i.e., there exists a $C>0$ such that
\begin{align}
\label{S4: Poincare-1}
\|u\|_2\leq C\| |\nabla_{g_C} u|_{g_C}\|_2,\quad u\in  W^1_2(J\times B).
\end{align}
Assume, on the contrary, that there exists a sequence $(u_k)_{k\in\N}$ in $W^1_2(J\times B)$ such that
$$ \|u_k\|_2 \geq k \| |\nabla_{g_C} u_k|_{g_C}\|_2. $$
Let $\displaystyle v_k= \frac{u_k}{\|u_k\|_2}$. Then $\|v_k\|_2=1$ and 
$$\displaystyle  \| |\nabla_{g_C} v_k|_{g_C}\|_2= \frac{\| |\nabla_{g_C} u_k|_{g_C}\|_2}{\|u_k\|_2}\leq \frac{1}{k}.$$
Hence $\|v_k\|_{1,2}= (\|v_k\|_2^2 + \| |\nabla_{g_C} v_k|_{g_C}\|_2^2)^{1/2} \leq 2$. By the compact embedding~\eqref{S4: cpt emb-1} and the embedding \eqref{S4: emb-2}, we can find a $v_\infty\in W^1_2(J\times B)$ such that 
$$v_k\rightharpoonup v_\infty \quad \text{ in } W^1_2(J\times B),\quad \text{and}\quad v_k\to v_\infty \quad \text{ in } L_2(\bar{J}\times B). $$
Therefore, by weak lower semicontinuity, 
$$1\leq \|v_\infty\|_{1,2}  \leq \liminf\limits_{k\to\infty} \|v_k\|_{1,2}=1, $$
which implies
$$ \| |\nabla_{g_C} v_\infty|_{g_C}\|_2=0. $$
Now we can infer that $|\nabla_{g_C} v_\infty|_{g_C}=0$ almost everywhere and thus $v_\infty\equiv c$ on $\bar{J}\times B$. However, by $\gamma_{B_0}(v_\infty)=0$, we have $v_\infty\equiv 0$ on $\bar{J}\times B$, which contradicts $\|v_\infty\|_2=1$.

(iii) Now choose $u\in W^1_2(P)$, where $P=P(R,B;J)$. Let $g_P=\phi_P^* g_C$. Then $\phi_{P,*}u\in W^1_2(J\times B)$, where $\phi_{P,*}=(\phi_P^{-1})^*$, and by \eqref{S4: Poincare-1}
\begin{align*}
\|u\|_{L_2(P)}=\|\phi_{P,*} u\|_{L_2(J\times B)}\leq C\|  |\nabla_{g_C}(\phi_{P,*} u)|_{g_C}\|_{L_2(J\times B)}=C\|  |\nabla_{g_P} u|_{g_P}\|_{L_2(P)}.
\end{align*}

(iv) 
For a model $\Gamma$-wedge over $P(R,B)$, $W(R,B,\Gamma)$, we define a diffeomorphism
$$\phi_P^\Gamma: W(R,B,\Gamma)\to J\times B\times \Gamma:\quad (t,R(t)y,x)\mapsto (t,y,x) .$$
By (i) and (ii), we can prove the Poincar\'e inequality on $J\times B\times \Gamma$. The Poincar\'e inequality on any $W_i$ follows in the same way as in (iii). To sum up, we have proved that
\begin{align}
\label{S4: Poincare-2}
\|u\|_{L_2(W_i)}\leq C\| |\nabla u|_g\|_{L_2(W_i)},\quad u\in  W^1_2(W_i)
\end{align}
for some $C>0$. 

(v)
Now we consider the {\em singular manifold} $V$ in part (i) of Definition~\ref{S4: Torn mfd}. As before, for simplicity, we assume that $V$ is connected. Otherwise, we just consider the problem on each connected component of $V$.

Since $(V,g)$ is a singular manifold, again by definition and Rellich-Kondrachov embedding theorem for compact manifolds, we have
$$W^1_2(V)\doteq \zW^1_2(\sM)\hookrightarrow L_2(\sM), $$
where $\zW^1_2(\sM)$ is the closure of $\mathcal{D}(V)$ in $W^1_2(\sM)$, and the second embedding is compact. 

An analogous argument as in step (i)-(ii) yields
\begin{align}
\label{S4: Poincare-3}
\|u\|_{L_2(V)}\leq C\| |\nabla u|_g\|_{L_2(V)},\quad u\in  W^1_2(V)
\end{align}
for some $C>0$. 

(iv) Our final step is to establish the Poincar\'e inequality on $(\M,g)$ by gluing up together the inequalities obtained in (iv) and (v). Given any $u\in W^1_2(\M)$, then $u|S\in W^1_2(S)$ with $S\in \{V,W_1,\cdots, W_n\}$. It follows from \eqref{S4: Poincare-2} and \eqref{S4: Poincare-3} that
\begin{align*}
\|u\|^2_{L_2(\M)}&= \|u\|^2_{L_2(V)}+\sum_i \|u\|^2_{L_2(W_i)} \leq C (\||\nabla u|_g\|^2_{L_2(V)}+\sum_i \||\nabla u|_g\|^2_{L_2(W_i)}) \\
&=C\| |\nabla u|_g\|^2_{L_2(\M)}.
\end{align*}
This completes the proof.
\end{proof}

\section{\bf Contraction  strongly continuous analytic semigroups}

In this section, we consider a class of second order differential operators of the form
$$
\sA u = -\div (a\, \gd u) 
$$
on $(\M,g;\rho)$, a {\em $C^2$-compact singular manifold  with wedge ends and $\boldsymbol{\beta}$-removable singularities}. Here the coefficient $a$ satisfies the following assumption:
\begin{equation}
\label{S4: ASP coef}
a\in BC^{1,0}(\M,\R_+),\quad \inf\limits_{\p\in\M} a(\p) >0.
\end{equation}
The sesquilinear form associated  with $\sA$ with respect to $L_2(\M)$ is 
$$\a(u,v)=\int_\M ( a \nabla u | \nabla \bar{v})_g\, d\mu_g $$
with $dom(\a)=W^1_2(\M)=:X$.

\begin{prop}
\label{S4: cont-L_2-coer}
$\a$ is continuous and $X$-coercive. More precisely,
\begin{itemize}
\item[](Continuity) there exists some constant $M$ such that for all $u,v\in X$
$$|\a(u,v)|\leq M\|u\|_X \|v\|_X ;$$
\item[]($X$-Coercivity) there is some $M$ such that for any $u\in X$
$$\Rp(\a(u,u)) \geq M \|u\|^2_X.$$
\end{itemize}
\end{prop}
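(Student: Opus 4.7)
The plan is to prove the two estimates directly from the pointwise Cauchy--Schwarz inequality for $(\cdot|\cdot)_g$, the hypotheses on $a$ in \eqref{S4: ASP coef}, and the Poincar\'e inequality of Theorem~\ref{S4: Poincare ineq}.

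For continuity, I would start from the pointwise bound $|(a\nabla u|\nabla\bar v)_g| \leq a\,|\nabla u|_g\,|\nabla v|_g \leq \|a\|_\infty\,|\nabla u|_g\,|\nabla v|_g$, which is finite by \eqref{S4: ASP coef}. Integrating over $\M$ and applying the scalar Cauchy--Schwarz inequality in $L_2(\M)$ gives
\[
|\a(u,v)| \leq \|a\|_\infty \,\||\nabla u|_g\|_2\,\||\nabla v|_g\|_2 \leq \|a\|_\infty \,\|u\|_X\|v\|_X,
\]
which is the continuity bound with $M=\|a\|_\infty$.

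For $X$-coercivity, set $a_0:=\inf_\M a$, which is strictly positive by \eqref{S4: ASP coef}. Since $\a(u,u)=\int_\M a\,|\nabla u|_g^2\,d\mu_g$ is already real and nonnegative, I get directly
\[
\Rp(\a(u,u)) = \int_\M a\,|\nabla u|_g^2\,d\mu_g \geq a_0\,\||\nabla u|_g\|_2^2.
\]
The hypotheses of the proposition include that $(\M,g;\rho)$ is a $C^2$-compact singular manifold with wedge ends and $\boldsymbol\beta$-removable singularities, which is exactly what Theorem~\ref{S4: Poincare ineq} requires (up to the $C^1$ vs.\ $C^2$ regularity, the latter being stronger). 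Applying that Poincar\'e inequality yields a constant $C_P>0$ with $\|u\|_2^2 \leq C_P^2\,\||\nabla u|_g\|_2^2$, hence
\[
\|u\|_X^2 = \|u\|_2^2 + \||\nabla u|_g\|_2^2 \leq (1+C_P^2)\,\||\nabla u|_g\|_2^2.
\]
Combining the two displays gives $\Rp(\a(u,u)) \geq \dfrac{a_0}{1+C_P^2}\,\|u\|_X^2$, which is the coercivity estimate with $M=a_0/(1+C_P^2)$.

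The only nonroutine input is the Poincar\'e inequality on $\M$; once Theorem~\ref{S4: Poincare ineq} is in hand, both bounds fall out immediately, so I do not anticipate a real obstacle in this proposition itself. The delicate part of the whole program is therefore already done in Section~3, and here it is simply a matter of packaging it with the boundedness and positivity of $a$.
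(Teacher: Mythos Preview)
Your proof is correct and follows essentially the same route as the paper: pointwise Cauchy--Schwarz followed by the $L_2$ Cauchy--Schwarz for continuity, and the lower bound $\inf a>0$ together with Theorem~\ref{S4: Poincare ineq} for coercivity. If anything, you track the constants more explicitly than the paper does.
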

\begin{proof}
(i) A direct computation shows
\begin{align*}
|\a(u,v)|&\leq\int_\M |(\nabla u | \nabla \bar{v})_g|\, d\mu_g \leq \int_\M |\nabla u |_g |\nabla v|_g\, d\mu_g\\
&\leq \||\nabla u |_g\|_2  \||\nabla v |_g\|_2 \leq \|u\|_X \|v\|_X.
\end{align*}
(ii) We have
\begin{align*}
\Rp \a(u,u)=\a(u,u) \geq C \||\nabla u|_g\|_2^2 \geq C^\prime \|u\|_X^2.
\end{align*}
The first inequality follows from \eqref{S4: ASP coef}. Meanwhile, the second inequality is a straightforward conclusion from  Theorem~\ref{S4: Poincare ineq}.
\end{proof}

Based on Proposition~\ref{S4: cont-L_2-coer}, $\a$ with $dom(\a)=X$ is densely defined, sectorial and closed on $L_2(\M)$. By \cite[Theorems~VI.2.1, IX.1.24]{Kato80}, we can associate with $\a$ an operator $T$ such that $-T$ generates a contraction analytic strongly continuous semigroup on $L_2(\M)$, i.e., $\|e^{-tT}\|_{\L(L_2(\M))}\leq 1$ for all $t\geq 0$, with domain 
$$dom(T):=\{u\in X, \exists ! v\in L_2(\M):\a(u,\phi)=\langle v | \phi \rangle, \forall \phi\in X \},\quad T u=v, $$
which is a core of $\a$. 
$T$ is unique in the sense that it is the  only  operator satisfying 
$$\a(u,v)= \langle T u, v \rangle,\quad u\in dom(T),\, v\in X.$$
Looking at the original differential operator $\sA$,  from Proposition~\ref{S2: nabla-unweighted}, \eqref{S2: div-thm-unweighted-1} and \eqref{S2: div-thm-unweighted-2}, we infer that $\sA\in \L(X, X^\prime)$ and
$$\langle \sA u | v \rangle= \a(u,v),\quad u,v\in X.$$
Hence the uniqueness of $T$ implies that
$$\sA|_{dom(T)}=T .$$

\begin{prop}
\label{S4: contraction semigroup-Lp}
$-\sA$ generates a contraction strongly continuous semigroup on $L_p(\M)$  for  $1\leq p< \infty$
and
\begin{equation}
\label{S4: Lap spectrum}
\sup\{ \Rp(\mu): \, \mu\in \sigma(-\sA)\}<0.
\end{equation}
\end{prop}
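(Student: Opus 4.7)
The plan is to combine the Beurling--Deny machinery for the closed sectorial form $\a$ with the $L_2$-coercivity that Theorem~\ref{S4: Poincare ineq} upgrades in Proposition~\ref{S4: cont-L_2-coer}(ii).

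First I would verify that $(\a,X)$ is a Dirichlet form on $L_2(\M)$. Positivity preservation uses that $|u|\in W^1_2(\M)$ with $|\nabla|u||_g=|\nabla u|_g$ almost everywhere whenever $u\in W^1_2(\M)$; combined with $a>0$ real this gives $\a(|u|,|u|)=\a(u,u)$. The $L_\infty$-contraction (Markov) criterion amounts to checking that $v:=(\Rp u)\wedge 1\in X$ and $\Rp\a(v,u-v)\geq 0$ for every real $u\in X$; both follow from the standard chain-rule and truncation identities for real Sobolev functions, transferred from Euclidean pieces to $(\M,g)$ through the uniformly regular atlas. The Beurling--Deny/Ouhabaz criteria then guarantee that $e^{-tT}$ extends consistently to a positivity-preserving contraction strongly continuous semigroup on $L_p(\M)$ for every $p\in[1,\infty)$, whose generator agrees on $\mathcal{D}(\M)$ with the differential operator $-\div(a\gd\,\cdot)=-\sA$.

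For the spectral bound, Theorem~\ref{S4: Poincare ineq} turns the $X$-coercivity of Proposition~\ref{S4: cont-L_2-coer} into genuine $L_2$-coercivity:
\[
\Rp\a(u,u)\geq \bigl(\inf_{\p\in\M}a(\p)\bigr)\,\||\nabla u|_g\|_2^2\geq \lambda_0 \|u\|_2^2,\qquad u\in X,
\]
for some $\lambda_0>0$. Since $a$ is real, $T$ is self-adjoint with $T\geq \lambda_0 I$, so $\sigma(T)\subset[\lambda_0,\infty)$ on $L_2(\M)$ and $\|e^{-tT}\|_{L_2\to L_2}\leq e^{-\lambda_0 t}$. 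For $p\in(2,\infty)$, Riesz--Thorin interpolation between this bound and $\|e^{-tT}\|_{L_\infty\to L_\infty}\leq 1$ yields $\|e^{-t\sA}\|_{L_p\to L_p}\leq e^{-2\lambda_0 t/p}$; for $p\in(1,2)$ the analogous interpolation against the $L_1$-contraction (inherited by duality from the $L_\infty$-contraction, using symmetry of $\a$) gives $\|e^{-t\sA}\|_{L_p\to L_p}\leq e^{-2\lambda_0(1-1/p)t}$. This exponential decay translates immediately into $\sup\{\Rp(\mu):\mu\in\sigma(-\sA)\}<0$ on each $L_p(\M)$ with $p\in(1,\infty)$.

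The point I expect to be most delicate is the truncation half of Beurling--Deny on the singular manifold: one has to justify the chain rule and the pointwise inequality $|\nabla(u\wedge 1)|_g\leq |\nabla u|_g$ rigorously in $W^1_2(\M)$, proceeding through the uniformly regular atlas and the compact embedding $\zW^1_2(\sM)\hookrightarrow L_2(\sM)$ already isolated in the proof of Theorem~\ref{S4: Poincare ineq}. The endpoint $p=1$ also falls outside the Riesz--Thorin scheme, since interpolating against $L_1$-contraction produces only a trivial rate there; to close it one can either supply an $L_1\to L_2$ ultracontractivity estimate on a finite time interval (which then couples with the $L_2$ decay to yield exponential decay on $L_1$) or, alternatively, invoke $p$-independence of the spectrum via compactness of the resolvent of $\sA$ on $L_2$, again coming from the Rellich embedding exploited in Theorem~\ref{S4: Poincare ineq}.
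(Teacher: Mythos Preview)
Your route for the contraction part is essentially the paper's: the paper checks the Ouhabaz criterion $\Rp\,\a(u,(|u|-1)^+\sg u)\geq 0$ to obtain $L_\infty$-contractivity of $e^{-t\sA}$ on $L_2$, and then transfers this to contraction $C_0$-semigroups on all $L_p(\M)$, $1\leq p<\infty$, by the Davies interpolation/duality argument. Your Beurling--Deny formulation is the same mechanism dressed slightly differently.

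For the spectral bound the paper takes a different, and in one respect cleaner, path. Instead of interpolating the $L_2$ exponential decay against the endpoint contractions, it simply observes that for $\omega>0$ small the shifted form $\a_\omega(u,v)=\a(u,v)-\omega\langle u,v\rangle$ still satisfies the hypotheses used above (the Poincar\'e inequality of Theorem~\ref{S4: Poincare ineq} gives $\a(u,u)\geq c\|u\|_2^2$, so $\a_\omega$ remains $X$-coercive), and then reruns the entire $L_\infty$-contractivity plus Davies argument for $\sA-\omega$. This yields $\|e^{-t(\sA-\omega)}\|_{L_p\to L_p}\leq 1$ directly for every $p\in[1,\infty)$, hence $\|e^{-t\sA}\|_{L_p\to L_p}\leq e^{-\omega t}$ uniformly in $p$, and the spectral bound follows at once---including at $p=1$, without needing the ultracontractivity or resolvent-compactness workarounds you flagged. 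Your Riesz--Thorin argument is correct for $1<p<\infty$ and is a perfectly standard alternative, but the paper's perturbation trick buys the $p=1$ endpoint for free and keeps the proof uniform across $p$.
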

\begin{proof}
A simple computation shows that $u\in X= W^1_2(M)$ implies that 
$$(|u|-1)^+  \sg u \in W^1_2(M)$$ 
and
$$
\nabla ((|u|-1)^+  \sg u)=
\begin{cases}
\nabla u, \quad & |u|>1;\\
0, & |u|\leq 1.
\end{cases}
$$
Here it is understood that 
\begin{align*}
\sg u:=
\begin{cases}
u/|u|, \quad & u\neq 0;\\
0, &u=0.
\end{cases}
\end{align*}
It is an easy task to show that
$$\Rp(\a(u,(|u|-1)^+  \sg u))\geq 0.$$
A semigroup $T(t)$ is called $L_\infty$-contractive if for all $u\in  L_\infty(\M)\subset L_2(\M)$
$$
\|T(t) u\|_\infty \leq \|u\|_\infty,\quad t\geq 0.
$$
By \cite[Theorem~2.7]{Ouh92}, the semigroup $\{e^{-t\sA}\}_{t\geq 0}$ is $L_\infty$-contractive.

The rest of the proof for the first part of the assertion follows in the same manner as in \cite[Theorem~1.4.1]{Dav89}.

To see that the spectral bound is valid, we look at the operator
$\sA -\omega$ for some sufficiently small
positive $\omega$. 
From Theorem~\ref{S4: Poincare ineq}, we can show by following 
the above argument step by step that the first part of the 
assertion still holds true for $\sA-\omega$ with $\omega$ small.
This  immediately gives a spectral bound for $\sA$.
\end{proof}

\begin{prop}
\label{S4: analytic contraction semigroup-Lp}
$-\sA$ generates a contraction analytic strongly continuous semigroup on $L_p(\M)$ for $1< p <\infty$.
\end{prop}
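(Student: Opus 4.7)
The strategy is to combine three facts already in hand: (a) the form representation that makes $-\sA$ the generator of a contraction analytic semigroup on $L_2(\M)$; (b) the $L_\infty$-contractivity established in the proof of Proposition~\ref{S4: contraction semigroup-Lp}; and (c) the symmetry of the coefficient $a$. From these, analyticity on $L_p$ for all $1<p<\infty$ will follow by Stein's interpolation theorem.

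First I would observe that the form $\a$ built in Proposition~\ref{S4: cont-L_2-coer} is symmetric, since $a$ is real-valued and positive. Together with continuity and $W^1_2$-coercivity, this forces the associated operator $T$ to be non-negative and self-adjoint on $L_2(\M)$, so $-T$ generates a contraction analytic $C_0$-semigroup in every sector $\Sigma_\theta$ with $\theta<\pi/2$. Since $\sA|_{dom(T)}=T$, the same conclusion transfers to $-\sA$ on $L_2(\M)$. Next, the argument in Proposition~\ref{S4: contraction semigroup-Lp} (Beurling--Deny criterion applied to the test section $(|u|-1)^+\sg u$, via \cite{Ouh92}) already gives that the semigroup $\{e^{-t\sA}\}_{t\geq 0}$ is $L_\infty$-contractive. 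Symmetry of $\a$ implies that the adjoint semigroup coincides with $\{e^{-t\sA}\}_{t\geq 0}$, so by duality it is also $L_1$-contractive. Hence the semigroup is symmetric sub-Markovian on $L_2(\M)$.

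Finally, I would invoke Stein's interpolation theorem for symmetric sub-Markovian semigroups (cf.~\cite[Theorem~1.4.2]{Dav89}): analyticity in the full right half-plane on $L_2$, coupled with contractivity on $L_1$ and $L_\infty$, extends by complex interpolation to analyticity on every $L_p(\M)$, $1<p<\infty$, with a positive (if $p$-dependent) sector angle and with contraction bounds on that sector preserved. Combined with the $L_p$-contraction of the $C_0$-semigroup from Proposition~\ref{S4: contraction semigroup-Lp}, this produces the desired contraction analytic strongly continuous semigroup on $L_p(\M)$.

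\textbf{Expected obstacle.} The argument is essentially a bookkeeping exercise once the form is known to be symmetric; the only delicate check is that $a$ being real-valued really turns the form into a symmetric one, so that $L_\infty$-contractivity self-dualises into $L_1$-contractivity. Beyond that, no nontrivial new input is needed — Stein interpolation does the rest.
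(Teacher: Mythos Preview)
Your argument is correct and follows the classical Stein-interpolation route for self-adjoint semigroups that are contractive on the full $L_p$-scale. The paper itself does not spell out a proof here: it simply records that the argument runs as in \cite[Theorem~3.7]{Shao15} with the weight parameters specialised to $\lambda=2$, $\lambda'=0$ --- that reference treats a broader class of operators on weighted $L_p$-spaces over singular manifolds. Your approach is more elementary and entirely self-contained within the present paper: symmetry of $\a$ (from the real-valuedness of $a$) gives self-adjointness and hence analyticity in the whole right half-plane on $L_2$; $L_\infty$-contractivity comes from Proposition~\ref{S4: contraction semigroup-Lp}; self-adjointness dualises this to $L_1$-contractivity; and then Stein interpolation transports analyticity to every $L_p$, $1<p<\infty$, with contraction bounds intact.

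Two small remarks. First, the term ``sub-Markovian'' usually entails positivity, which you have not verified at this stage (it appears later, in the proof of Lemma~\ref{S6: Positivity}); however, the Stein interpolation step needs only self-adjointness on $L_2$ together with $L_1$- and $L_\infty$-contractivity, so this is a terminological rather than mathematical gap. Second, \cite[Theorem~1.4.2]{Dav89} is not quite the right pointer --- Theorems~1.4.1--1.4.2 in Davies concern $L_p$-contractivity, whereas the analytic extension you invoke is Stein's theorem itself (see, e.g., Stein's \emph{Topics in Harmonic Analysis} or the discussion around \cite[Theorem~1.4.2]{Dav89} combined with the spectral-theoretic analyticity on $L_2$). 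The mathematics, however, is sound.
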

\begin{proof}
The proof  follows in the same way as that of \cite[Theorem~3.7]{Shao15} with $\lambda=2$ and $\lambda^\prime=0$. 
\end{proof}

We denote the $L_p$-realization of $\sA$ by $\sA_p$. 
Let $\R_\M$ be the set of all real-valued component-wise constant functions on $\M$. 
\begin{lem}
Assume that $(\M,g;\rho)$ is a  $C^2$-compact singular manifold  with uniformly mild wedge ends and $\boldsymbol{1}$-removable singularities. Suppose that the operator $\sA_p u=-\div(a \gd u)$ and $a=C_M + \hat{a}$, where $C_\M \in\R_\M$ satisfies $C_\M>0$ and $\hat{a}\in BC^{1,\vartheta}(\M)$ with $\vartheta<0$ and $\|\hat{a}\|_\infty<\inf C_M$.
Then
$$dom(\sA_p)\doteq W^{2,-2}_p(\M)\quad \text{for all }1<p<\infty.$$
\end{lem}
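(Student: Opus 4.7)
The plan is to identify $dom(\sA_p)$ with $W^{2,-2}_p(\M)$ by showing both (i) $\sA_p\in\L(W^{2,-2}_p(\M),L_p(\M))$, and (ii) $\omega+\sA_p:W^{2,-2}_p(\M)\to L_p(\M)$ is a topological isomorphism for some $\omega$ in the resolvent set of $-\sA_p$. Since Proposition~\ref{S4: analytic contraction semigroup-Lp} provides that $\omega+\sA_p:dom(\sA_p)\to L_p(\M)$ is also an isomorphism for such $\omega$, (i) combined with (ii) forces $W^{2,-2}_p(\M)=dom(\sA_p)$ with equivalent norms.

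Step (i) is a routine chaining of the weighted mapping properties from Section~2. Given $u\in W^{2,-2}_p(\M)$, Property~\ref{S2: nabla} produces $\gd u\in W^{1,0}_p(\M,V^1_0)$. Since $C_\M\in\R_\M\subset BC^{1,0}(\M)$ and $\hat{a}\in BC^{1,\vartheta}(\M)\hookrightarrow BC^{1,0}(\M)$ by Property~\ref{S2: embedding} (using $\vartheta<0$ and $\rho\leq 1$), we have $a\in BC^{1,0}(\M)$. Property~\ref{S2: pointwise multiplication} then gives $a\gd u\in W^{1,0}_p(\M,V^1_0)$, and Property~\ref{S2: nabla} again yields $\div(a\gd u)\in L_p(\M)$, so $\sA_p$ is bounded from $W^{2,-2}_p(\M)$ to $L_p(\M)$.

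For step (ii), I decompose $\sA_p=\sA^0+\sA^1$, where $\sA^0 u:=-C_\M\Delta_g u$ (the identity $\div(C_\M\gd u)=C_\M\Delta_g u$ holds because $C_\M$ is constant on each connected component) and $\sA^1 u:=-\div(\hat{a}\gd u)=-\hat{a}\Delta_g u-(\gd\hat{a}\mid\gd u)_g$. The core assertion is that on a $C^2$-compact singular manifold with uniformly mild wedge ends and $\boldsymbol{1}$-removable singularities, $\omega+\sA^0$ is an isomorphism from $W^{2,-2}_p(\M)$ onto $L_p(\M)$ for all sufficiently large $\omega>0$; this is to be obtained by the same localization and weighted-elliptic-estimates scheme used in the proof of \cite[Theorem~3.7]{Shao15} that Proposition~\ref{S4: analytic contraction semigroup-Lp} already invoked. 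Given this, the bounded perturbation $\sA^1:W^{2,-2}_p(\M)\to L_p(\M)$ supplied by step (i) is absorbed by choosing $\omega$ so large that $\|(\omega+\sA^0)^{-1}\sA^1\|_{\L(W^{2,-2}_p(\M))}<1$, exploiting $\|\hat{a}\|_\infty<\inf C_\M$ together with $\|\rho^{\vartheta+1}|\gd\hat{a}|_g\|_\infty<\infty$ and $\rho^{-\vartheta}\leq 1$ to bound $\sA^1$; a Neumann-series argument then yields the isomorphism for $\omega+\sA_p$.

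The main obstacle is the model isomorphism for $\omega+\sA^0$. Away from the wedge ends and the removable singularities $\boldsymbol{\Sigma}$, this is standard elliptic regularity on a region of a uniformly regular Riemannian manifold. Near each wedge $W_i$, one uses the canonical stretching diffeomorphism $\phi_{P_i}$ and the explicit metric $\phi_{P_i}^*(dt^2+g_{B_i})+g_{\Gamma_i}$ with $R_i\in\mathscr{C_U}(J)$ to transfer the problem to a product geometry on which $p$-independent weighted elliptic estimates apply; near each $\Sigma_j$ the condition $\boldsymbol{\beta}=\boldsymbol{1}$ forces $\rho=\d_j$, placing the local model in the conic/edge framework of Amann and \cite{Shao15}. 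Reassembling these local estimates via a uniformly locally finite partition of unity subordinate to the atlas $\A$ closes the argument; treating the cusp end in a $p$-independent manner, using the uniform-mildness hypothesis $R_i\in\mathscr{C_U}(J)$, is the technically delicate step.
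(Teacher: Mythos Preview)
Your overall architecture matches the paper's: first establish that $\omega-C_\M\Delta_p\in\Lis(W^{2,-2}_p(\M),L_p(\M))$ for large $\omega$, then pass to the full operator by perturbation.  For the model step the paper simply invokes a slight modification of \cite[Corollary~5.20]{Shao15}; your sketch via localization near the wedge ends and near $\boldsymbol{\Sigma}$ is morally the same argument.

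The gap is in your perturbation step.  You write that one can ``choose $\omega$ so large that $\|(\omega+\sA^0)^{-1}\sA^1\|_{\L(W^{2,-2}_p(\M))}<1$''.  This is not how the principal part is controlled: $\sA^1$ contains $-\hat a\,\Delta_g$, which has the same order as $\sA^0$, and $\|(\omega+\sA^0)^{-1}\|_{\L(L_p,W^{2,-2}_p)}$ does not decay as $\omega\to\infty$.  Large $\omega$ only damps the lower-order piece $-(\gd\hat a\mid\gd u)_g$, and even that requires a weighted interpolation estimate of the type $\|\rho^{-\vartheta-1}|\gd u|_g\|_p\le\varepsilon\|\Delta_g u\|_p+C_\varepsilon\|u\|_p$ which is not available off the shelf on these spaces; obtaining it forces you back into a near/far-from-singularity splitting.

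The paper handles the perturbation differently and more cleanly: it localizes to the collar $\M_r=\bigcup_j\mathscr{B}_{j,r}\cup\bigcup_i\mathscr{G}_{i,r}$ and uses the hypothesis $\hat a\in BC^{1,\vartheta}(\M)$ with $\vartheta<0$ to make the \emph{full} operator-norm difference $\|-C_\M\Delta_p-\sA_p\|_{\L(\mathring W^{2,-2}_p(\M_r),L_p(\M_r))}$ small for $r$ small (both the principal and the first-order pieces of $\sA^1$ vanish as $\rho\to0$).  Standard perturbation of $\mathscr R$-sectorial/analytic generators \cite[Theorem~1.5]{DenHiePru03} then gives the result on $\M_r$; on the uniformly regular remainder $\M\setminus\M_{r/2}$ one quotes \cite[Theorem~5.2]{Ama13b}; and a gluing argument as in \cite[Theorem~5.17]{Shao15} closes the proof.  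Note that in this route the condition $\|\hat a\|_\infty<\inf C_\M$ is used only to keep $a>0$ (uniform ellipticity on the regular part), not to run a global Neumann series.
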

\begin{proof}
It is clear that the operator $-C_\M \Delta_p$ fulfils the condition~\eqref{S4: ASP coef} and thus all the statements established for $\sA$ in this section holds true for $-C_\M \Delta_p$.

On the other hand, by a slight modification of the proof for \cite[Corollary~5.20]{Shao15}, we infer that for $\omega>0$ sufficiently large
$$C_\M \Delta_p+ \omega  \in \Lis (W^{2,-2}_p(\M), L_p(\M)).$$
This immediately implies that $dom(C_\M \Delta_p)\doteq W^{2,-2}_p(\M)$. 

Then the assertion follows from a similar perturbation argument to the proof of \cite[Theorem~6.1]{RoiSch15}. For the reader's convenience, we will briefly state the idea. 
We consider the manifold $(\M_r,g;\rho)$ with boundary, where $\M_r:=\cup_j \mathscr{B}_{j,r} \bigcup \cup_j \mathscr{G}_{i,r}$. Then by the same argument as above, one can show that 
$C_\M \Delta_p$ generates a contraction analytic strongly continuous semigroup on $L_p(\M_r)$ with domain $\mathring{W}^{2,-2}_p(\M_r)$, where $\mathring{W}^{2,-2}_p(\M_r)$ denotes the subspace of $W^{2,-2}_p(\M_r)$ with vanishing trace on $\partial\M_r$ whenever it exists. 
By the resolvent formula and the contractivity of $-C_\M \Delta_p$, 
$$
(\lambda+C_\M\Delta_p)^{-1}=\int_0^\infty e^{-\lambda t} e^{tC_\M \Delta_p}\, dt, \quad \lambda>0,
$$
we can show that $\|\Delta_p(\lambda+\Delta_p)^{-1}\|_{\L(L_p(\M_r)}$ is uniformly bounded for $r$ small.
For any $\varepsilon>0$, there is a sufficiently small $r=r(\varepsilon)>0$ such that 
$$
\|-C_M\Delta_p -\sA_p\|_{\L(\mathring{W}^{2,-2}_p(\M_r), L_2(\M_r))} \leq \varepsilon.
$$
Then by the standard perturbation theory for linear semigroups (cf. \cite[Theorem~1.5]{DenHiePru03}), we conclude that
$\sA_p$ generates an analytic strongly continuous semigroup on $L_p(\M_r)$ with domain $\mathring{W}^{2,-2}_p(\M_r)$ as long as $r$ is small enough.
The manifolds $(\M_0,g):=(\M\setminus \M_{r/2},g)$ is a {\em uniformly regular Riemannian manifold}, which can be viewed as a {\em singular manifold} with $\rho\sim {\bf 1}_\M$. By \cite[Theorem~5.2]{Ama13b}, $\sA_p$ 
generates an analytic strongly continuous semigroup on $L_p(\M_0)$ with domain $\mathring{W}^{2,-2}_p(\M_0)$. The last step is to glue together the results on $\M_r$ and $\M_0$ and conclude that, for $\omega$ sufficiently large, 
$$
\sA_p+\omega \in \Lis (W^{2,-2}_p(\M), L_p(\M)).
$$
This implies the desired result. This idea was exhibited in the proof of \cite[Theorem~5.17]{Shao15}.

A different proof for this fact can also be found in \cite[Section~3]{Shao1603}.
\end{proof} 
\begin{remark}
Note that the differential operator $\sA_p$ in the above lemma satisfies \eqref{S4: ASP coef} and thus the spectrum bound~\eqref{S4: Lap spectrum} still holds true for $\sA_p$.
\end{remark}

We consider the following abstract Cauchy problem 
\begin{equation}
\label{S4: Cauchy problem}
\left\{\begin{aligned}
\partial_t u(t) +\cA u(t) &=f(t), &&t\geq 0\\
u(0)&=0 . &&
\end{aligned}\right. 
\end{equation}

For $\theta\in (0,\pi]$, the open sector with angle $2\theta$ is denoted by
$$\Sigma_\theta:= \{\omega\in \mathbb{C}\setminus \{0\}: |\arg \omega|<\theta \}. $$
\begin{definition}
Let $X$ be a complex Banach space, and $\cA$ be a densely defined closed linear operator in $X$ with dense range. $\cA$ is called sectorial if $\Sigma_\theta \subset \rho(-\cA)$ for some $\theta>0$ and
$$ \sup\{|\mu(\mu+\cA)^{-1}| : \mu\in \Sigma_\theta \}<\infty. $$
The class of sectorial operators in $X$ is denoted by $\S(X)$. 
\end{definition}

\begin{definition}
Assume that $X_1\overset{d}{\hookrightarrow}X_0$ is some densely embedded Banach couple.
Suppose that $\cA\in \S(X_0)$ with $dom(\cA)=X_1$.
The Cauchy problem \eqref{S4: Cauchy problem} has maximal $L_p$-regularity if for any 
$$f\in L_p([0,\infty), X_0) ,$$
\eqref{S4: Cauchy problem} has a unique solution
$$u\in L_p([0,\infty), X_1) \cap H^1_p([0,\infty), X_0) .$$
We denote this by 
$$\cA\in \mathcal{MR}_p(X_1, X_0).$$
\end{definition}
We will refer the reader to \cite{DenHiePru03, Pru03, PruSim16} for more details of maximal regularity theory.

Following the proof of \cite[Theorem~4.8]{Shao1502}, we can prove the following maximal regularity result.
\begin{theorem}
\label{S4: Thm-MR}
Suppose that $(\M,g;\rho)$ is a $C^2$-compact singular manifold with uniformly mild wedge ends and $\boldsymbol{1}$-removable singularities. Assume that the differential operator
$$
\sA u = -\div (a\, \gd u) 
$$ 
and $a=C_M + \hat{a}$, where $C_\M \in\R_\M$ satisfies $C_\M>0$ and $\hat{a}\in BC^{1,\vartheta}(\M)$ with $\vartheta<0$ and $\|\hat{a}\|_\infty<\inf C_M$. 
Then for all $1<p<\infty$
$$ 
\sA \in \mathcal{MR}_p(W^{2,-2}_p(\M), L_p(\M)).
$$
\end{theorem}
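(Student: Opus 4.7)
The plan is to derive maximal $L_p$-regularity by upgrading the sectoriality of $\sA_p$ (already in hand from Proposition~\ref{S4: analytic contraction semigroup-Lp}) to $\mathcal{R}$-sectoriality with $\mathcal{R}$-angle strictly less than $\pi/2$, following the same scheme as \cite[Theorem~4.8]{Shao1502}. The preceding lemma identifies $dom(\sA_p)\doteq W^{2,-2}_p(\M)$ for every $1<p<\infty$, and Proposition~\ref{S4: analytic contraction semigroup-Lp} gives that $-\sA_p$ generates a contraction analytic $C_0$-semigroup on $L_p(\M)$, so that $\sA_p\in\S(L_p(\M))$ with spectral angle below $\pi/2$. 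The spectral bound \eqref{S4: Lap spectrum} moreover guarantees $0\in\rho(\sA_p)$, so the resolvent estimate extends uniformly to an open sector surrounding $[0,\infty)$.

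The crucial step is to produce a bounded $H^{\infty}$-functional calculus on $L_p(\M)$ with angle $<\pi/2$. Because $a$ is real-valued, the form $\a$ is symmetric, densely defined, closed, and (by Proposition~\ref{S4: cont-L_2-coer}) coercive on $W^1_2(\M)$; hence the associated $L_2$-realization $\sA_2$ is self-adjoint and strictly positive, and the spectral theorem supplies it with a bounded $H^\infty$-calculus of angle zero. The semigroup $\{e^{-t\sA}\}_{t\geq 0}$ is additionally positivity preserving and $L_\infty$-contractive (both facts appear in the proof of Proposition~\ref{S4: contraction semigroup-Lp}), hence submarkovian. Standard extrapolation results for submarkovian semigroups then transport the bounded $H^\infty$-calculus from $L_2$ to $L_p(\M)$ for every $1<p<\infty$, with the $H^\infty$-angle preserved below $\pi/2$.

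Since $L_p(\M)$ is a UMD space, the Kalton–Weis theorem now upgrades this bounded $H^\infty$-calculus to $\mathcal{R}$-sectoriality with $\mathcal{R}$-angle $<\pi/2$. Combined with $0\in\rho(\sA_p)$, Weis' characterization of maximal regularity yields
$$\sA_p\in\mathcal{MR}_p(W^{2,-2}_p(\M),L_p(\M))$$
on the entire half-line $[0,\infty)$, as claimed.

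The main obstacle I expect is the submarkovian extrapolation step. The classical $H^\infty$-calculus results in this direction are usually formulated over $\R^m$ or over spaces enjoying global doubling/Poincar\'e structure, whereas the measure $d\mu_g$ here degenerates near the wedge tips and the removable singularities. To circumvent this one can either verify Ouhabaz-type $L_p$-contractivity form-theoretically for the whole shifted family $\sA+\mu$, $\mu>0$, directly from $\a$, or localize as in the preceding lemma and patch together the calculus on the uniformly regular interior piece $\M_0$ with that on the model wedge-cone pieces $\mathscr{G}_{i,r}$ and $\mathscr{B}_{j,r}$, where the explicit form of $g$ and $\rho$ allows appeal to \cite{RoiSch15}. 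The cited proof of \cite[Theorem~4.8]{Shao1502} implements precisely such a localization, and the same scheme applies here with only notational adjustments.
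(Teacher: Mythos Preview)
Your proposal is correct and, since the paper's own proof is nothing more than a pointer to \cite[Theorem~4.8]{Shao1502}, your write-up is effectively an expansion of the same route: form methods on $L_2$, contraction/positivity, then transfer to $L_p$ and invoke Weis' criterion via $\mathcal{R}$-sectoriality.

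One comment worth making: the obstacle you flag in the last paragraph is largely illusory. The extrapolation of a bounded $H^\infty$-calculus from $L_2$ to $L_p$ for symmetric submarkovian semigroups does \emph{not} require any doubling, Poincar\'e, or heat-kernel structure on the underlying measure space; Cowling's transference theorem (and its sharpening by Carbonaro--Dragi\v{c}evi\'c) works on an arbitrary $\sigma$-finite measure space and yields $H^\infty$-angle $\pi|1/2-1/p|<\pi/2$ for all $1<p<\infty$. The degeneracy of $d\mu_g$ near the wedge tips and the $\boldsymbol{1}$-removable singularities is therefore irrelevant at this step, and you can run your main argument cleanly without the localization detour. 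The localization is genuinely needed only earlier, in the preceding lemma, to pin down $dom(\sA_p)\doteq W^{2,-2}_p(\M)$; once that identification and the submarkovian property are in hand, the functional-calculus argument is purely abstract.
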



\section{\bf $L_p$-Stability of the porous medium equations}

Suppose that $(\M,g;\rho)$ is an $m$-dimensional   {\em $C^2$-compact singular manifolds  with uniformly mild wedge ends and $\boldsymbol{1}$-removable singularities}, which might not be connected.

We will establish existence and uniqueness of solutions to the following porous medium equation first. This part is comparable to the result in \cite{RoiSch15}.
\begin{equation}
\label{S5: PME-1}
\left\{\begin{aligned}
\partial_t u -\Delta (|u|^{n-1}u  )&=0   &&\text{on}&&\M\times (0,\infty);\\
u(0)&=u_0   &&\text{on}&&\M
\end{aligned}\right.
\end{equation}
for $n> 0$ and thus includes the fast diffusion equation. Here $\Delta=\Delta_g$, the Laplace-Beltrami operator with respect to $g$.
Since $\Delta (|u|^{n-1}u  )= n\div(|u|^{n-1}\gd u)$, by a rescaling of the temporal variable, \eqref{S5: PME-1} is equivalent to 
\begin{equation}
\label{S5: PME}
\left\{\begin{aligned}
\partial_t u -\div(|u|^{n-1}\gd u) &=0   &&\text{on}&&\M\times (0,\infty);\\
u(0)&=u_0   &&\text{on}&&\M.
\end{aligned}\right.
\end{equation}
This formulation also includes the logarithmic diffusion when $n=0$.
A direct computation shows that, by setting $f=|u|^{n-1}u$, \eqref{S5: PME} is equivalent to
\begin{equation*}
\left\{\begin{aligned}
\partial_t f - |f|^{\frac{n-1}{n}}\Delta f &=0   &&\text{on}&&\M\times (0,\infty);\\
f(0)&=f_0   &&\text{on}&&\M.
\end{aligned}\right.
\end{equation*} 
However, as we will see later,  the divergence form of \eqref{S5: PME} will assist us in obtaining the stability of solutions to the porous medium equation. For this reason, we will stay with the formulation in \eqref{S5: PME} in this section.

We assume that $p>m+2$ and set
$$E_0:=L_p(\M, \R),\quad E_1:=W^{2,-2}_p(\M, \R), $$
and
$$E_{1/p}:= W^{2-2/p,\frac{2}{p}-2}_p(\M, \R).$$ 
By Property~\ref{S2: embedding}, we have
\begin{equation}
\label{S5.1: crucial embedding}
E_{1/p}\hookrightarrow BC^{1,\frac{m+2}{p}-2}(\M, \R)\quad \text{and }\frac{m+2}{p}-2<0.
\end{equation}
We put 
$$B_R:=\{u\in BC^{1,\frac{m+2}{p}-2}(\M, \R): \, \|u\|_\infty<R\},$$ 
which is open in $BC^{1,-2+\frac{m+2}{p} }(\M, \R)$ by the embedding $BC^{1,-2+\frac{m+2}{p}}(\M, \R)\hookrightarrow BC^{0,0}(\M, \R)$,
and let
$$U_R:=\iota^{-1}(B_R),$$
where $\iota$ is the embedding map of $E_{1/p}\hookrightarrow BC^{1,\frac{m+2}{p}-2}(\M, \R)$.

 
Suppose that the initial datum is of the form $u_0= C_\M+ w_0$ with $C_\M \in\R_\M$ satisfying $|C_\M|>0$ and $w_0\in E_{1/p}$. 
Assumet that $w_0\in U_R$ with $R<\frak{b}:=\inf |C_\M|$.

We first consider the linearized porous medium equation at $u=C_\M$.
\begin{equation}
\label{S5: LPME}
\left\{\begin{aligned}
\partial_t u -|C_\M|^{n-1}\Delta u &=0   &&\text{on}&&\M\times (0,\infty);\\
u(0)&=C_\M   &&\text{on}&&\M.
\end{aligned}\right.
\end{equation}
Clearly, $u^*=C_\M$ is a solution to \eqref{S5: LPME}.

Next we consider the nonlinear part.
\begin{equation}
\label{S5: NPME}
\left\{\begin{aligned}
\partial_t u -\div( |u^*+u|^{n-1}\gd u) &=0   &&\text{on}&&\M\times (0,\infty);\\
u(0)&=w_0   &&\text{on}&&\M.
\end{aligned}\right.
\end{equation}
If \eqref{S5: NPME} admits a solution $\bar{u}$, then $\hat{u}=u^*+\bar{u}$ solves \eqref{S5: PME}.

\begin{lem}
\label{S51: lem1}
For any $u\in U_R$ with $R<\frak{b}$, $|u^*+u|^{n-1}\in BC^{1,\vartheta}(\M, \R)\oplus \R_\M$ with some $\vartheta<0$ and
$$ [u\mapsto |u^*+u|^{n-1}]\in C^\omega(U_R, BC^{1,0}(\M, \R)).$$
Here $\omega$ is the symbol of real analyticity. Moreover, we can find some $C>0$ such that
$$
|u^*+u|^{n-1}>C,\quad u\in U_R.
$$
\end{lem}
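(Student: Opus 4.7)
The plan is to verify the three assertions in the order: the uniform pointwise lower bound, the real analyticity of the substitution map into $BC^{1,0}(\M,\R)$, and the refined decomposition $|u^*+u|^{n-1}\in BC^{1,\vartheta}(\M,\R)\oplus\R_\M$ with some $\vartheta<0$. All three follow from Property~\ref{S2: embedding}, Property~\ref{S2: pointwise multiplication}, and Property~\ref{S2: substituion op}, together with an elementary Taylor expansion around $C_\M$.

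First I would observe that by \eqref{S5.1: crucial embedding} and Property~\ref{S2: embedding} (using $\frac{m+2}{p}-2<0$), the space $E_{1/p}$ embeds continuously into $BC(\M,\R)$, so $\|u\|_\infty<R$ for every $u\in U_R$. Because $C_\M\in\R_\M$ is constant on each connected component with absolute value at least $\mathfrak{b}>R$, the triangle inequality yields $|u^*+u|=|C_\M+u|\geq\mathfrak{b}-R>0$ pointwise on $\M$, and therefore $|u^*+u|^{n-1}\geq(\mathfrak{b}-R)^{n-1}=:C$.

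For the analyticity step, set $\varepsilon:=\sg(C_\M)\in\R_\M$ and $w:=\varepsilon(u^*+u)$, so that $w>\mathfrak{b}-R$ pointwise and $|u^*+u|^{n-1}=w^{n-1}$. The embedding $BC^{1,\vartheta_0}(\M,\R)\hookrightarrow BC^{1,0}(\M,\R)$ provided by Property~\ref{S2: embedding}, where $\vartheta_0:=\frac{m+2}{p}-2<0$, places $w$ in the open set $\{v\in BC^{1,0}(\M,\R):\,\inf v>\mathfrak{b}-R\}$. Applying Property~\ref{S2: substituion op} with $\vartheta=0$, $\alpha=n-1$, $b=\mathfrak{b}-R$, and precomposing with the affine (hence analytic) map $u\mapsto w$, one obtains $[u\mapsto|u^*+u|^{n-1}]\in C^\omega(U_R,BC^{1,0}(\M,\R))$.

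The decomposition comes from a first-order Taylor expansion of $s\mapsto(C_\M+su)^{n-1}$ about $s=0$. On a component where $\varepsilon=+1$ one writes
\[
(C_\M+u)^{n-1}-C_\M^{n-1}=u\cdot G(u),\qquad G(u):=(n-1)\int_0^1(C_\M+tu)^{n-2}\,dt,
\]
and the case $\varepsilon=-1$ is entirely analogous. The previous step applied with exponent $n-2$ shows that $(C_\M+tu)^{n-2}\in BC^{1,0}(\M,\R)$ with norms bounded uniformly in $t\in[0,1]$ and depending continuously on $t$, so the Bochner integral $G(u)$ lies in $BC^{1,0}(\M,\R)$. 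Property~\ref{S2: pointwise multiplication} then yields $u\cdot G(u)\in BC^{1,\vartheta_0}(\M,\R)$, whence
\[
|u^*+u|^{n-1}=|C_\M|^{n-1}+u\cdot G(u)\in\R_\M\oplus BC^{1,\vartheta_0}(\M,\R)
\]
with $\vartheta_0<0$, as required. I do not expect a serious obstacle here; the only mildly delicate point is the sign bookkeeping when $C_\M$ changes across components, which is absorbed once and for all by the component-wise factor $\varepsilon$, together with checking that the $t$-parametrized family $(C_\M+tu)^{n-2}$ is continuous in $BC^{1,0}(\M,\R)$, which is immediate from the analyticity just proved.
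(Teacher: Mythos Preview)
Your argument is correct and follows the same overall route as the paper: two-sided bounds on $|u^*+u|$, Property~\ref{S2: substituion op} for the $C^\omega$-substitution into $BC^{1,0}$, and a Taylor expansion around $C_\M$ for the decomposition. Two small points deserve attention.

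First, the inequality $|u^*+u|^{n-1}\geq(\mathfrak{b}-R)^{n-1}$ is only valid when $n\geq 1$; since Section~5 explicitly treats all $n>0$, for $n\in(0,1)$ you need instead the (equally trivial) upper bound $|u^*+u|\leq \sup|C_\M|+R$ to conclude $|u^*+u|^{n-1}\geq(\sup|C_\M|+R)^{n-1}>0$. The paper records both bounds $1/c\leq|u^*+u|\leq c$ in \eqref{S5: lower bd} precisely for this reason.

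Second, Property~\ref{S2: pointwise multiplication} as stated concerns $BC\times W_p\to W_p$; what you actually use is the $BC^{1,0}\times BC^{1,\vartheta_0}\to BC^{1,\vartheta_0}$ version, which is not literally that property but is an immediate consequence of the Leibniz rule and is equally standard.

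On the decomposition step your execution differs slightly from the paper's: the paper uses the pointwise Lagrange remainder for $(1+v)^{n-1}-1$, $v=u/u^*$, together with $\frac{m+2}{p}-2<-1$ to check membership in $BC^{1,-1}$ by hand, whereas you use the integral remainder and a Bochner integral in $BC^{1,0}$ to land directly in $BC^{1,\vartheta_0}$ with $\vartheta_0=\frac{m+2}{p}-2$. Your version is a bit cleaner and yields a sharper weight, but both are the same idea.
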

\begin{proof}
By the embedding~\eqref{S5.1: crucial embedding} and the choice of $R$, it is clear that $|u^* +u| \in BC^{1,0}(\M, \R)$ and we can find some $c\geq 1$ such that 
\begin{equation}
\label{S5: lower bd}
1/c<|u^* +u| <c ,\quad u\in U_R. 
\end{equation}
Now it follows from Property~\ref{S2: substituion op} that
$$[u\mapsto |u^* +u|^{n-1}] \in C^\omega(U_R, BC^{1,0}(\M,\R)).$$
Note that $v:=u/u^* \in BC^{1,\frac{m+2}{p}-2}(\M,\R)$ satisfies $\|v\|_\infty<1$.
So it is clear that 
$$ 
\inf|u^*+u|^{n-1}=|u^*|^{n-1}\inf |1+v|^{n-1}>0.
$$ 
It only remains to prove that $|u+u^*|^{n-1}-|u^*|^{n-1}\in BC^{1,\vartheta}(\M,\R) $ for some $\vartheta<0$.

By the Taylor  expansion, for any $\p\in \M$
$$(1+v(\p))^{n-1}-1=  \binom{n-1}{1} v(\p) +  (n-1)(n-2)(\frac{1-\theta(\p)}{1+\theta(\p)v(\p)})(1+\theta(\p)v(\p))^{n-2} v^2(\p), $$
where $\theta(\p)\in (0,1)$. 
Note that $\frac{m+2}{p}-2<-1$.
We have 
$$ \rho^{-1}(\p)[\binom{n-1}{1} v(\p) +  (n-1)(n-2)(\frac{1-\theta(\p)}{1+\theta(\p)v(\p)})(1+\theta(\p)v(\p))^{n-2} v^2(\p)]<M $$
for some $M>0$, and similarly
$$\| \nabla_g[|{\bf 1}_\M+v|^{n-1}-{\bf 1}_\M] \|_\infty<\infty.$$
\end{proof}

By Property~\ref{S2: nabla}, Lemma~\ref{S51: lem1}, \eqref{S5: lower bd} and Theorem~\ref{S4: Thm-MR}, for $R<\frak{b}$, 
$$[u\mapsto -\div( |u^*+u|^{n-1} \gd \cdot)]\in C^\omega(U_R, \mathcal{MR}_p(E_1,E_0)),\quad u\in U_R.$$
\cite[Theorem~2.1]{CleLi93}  now implies that there exists a unique solution to \eqref{S5: NPME} 
$$\bar{u}\in \bE(J):=L_p(J,E_1)\cap H^1_p(J, E_0) $$
for some $J=[0,T]$.

Next we will prove that $\hat{u}=u^*+\bar{u}$ is actually the unique solution to \eqref{S5: PME} in the class $\bE(J)\oplus \R_\M$.
By \eqref{S5.1: crucial embedding} and the well-known embedding theorem, see \cite[formula~(2.1)]{CleLi93} for example, we have
$$\bE(J) \hookrightarrow C(J, E_{1/p})\hookrightarrow C(J, BC^{1,\frac{m+2}{p}-2}(\M, \R)).$$
Because of $\frac{m+2}{p}-2<0$, we infer that $BC^{1,\frac{m+2}{p}-2}(\M, \R)\cap \R_\M={\bf 0}_\M$, which implies $\bE(J)\cap \R_\M={\bf 0}_\M$. 

Assume that there is a solution $v\in \bE(J)\oplus \R_\M$ to \eqref{S5: PME}, then it has a unique decomposition $v=v_1+v_2$ with $v_1\in \bE(J)$ and $v_2\in \R_\M$.  Plugging in the initial condition yields $v(0)=v_1(0)+v_2=w_0+C_\M$. So $v_2\equiv C_\M$. By the uniqueness of solution to \eqref{S5: NPME}, $\hat{u}=v$.

Next we look at the equilibria of \eqref{S5: PME}. It is clear that the stationary solutions to \eqref{S5: PME} are in $\R_\M$. 
For any $u\in E_{1/p}$, we put $F(u)=\div(|u^*+u|^{n-1}\gd u)$. Then an easy computation shows that 
$$\partial F(0)=|u^*|^{n-1}\Delta ,$$
where $\partial F(0)$ is the Fr\'echet derivative of $F$ at $0$. 
It has been shown in \eqref{S4: Lap spectrum} that 
\begin{equation}
\label{S5.2: spectrum}
\sup\{ \Rp(\mu): \, \mu\in \sigma(\partial F(0))\}<0.
\end{equation}

Now the asymptotic stability of the equilibria is an immediate consequence from  the well-known linearized stability theorem. We are ready to state the wellposedness and stability theorem for \eqref{S5: PME}.

\begin{theorem}
Suppose that $(\M,g;\rho)$ is an $m$-dimensional  $C^2$-compact singular manifolds  with uniformly mild wedge ends and $\boldsymbol{1}$-removable singularities, which might not be connected.  Let $\R_\M$ be the set of all real-valued component-wise constant functions on $\M$. Assume that $p>m+2$. 
\begin{itemize}
\item[(i)] Then for any initial value $u_0=C_\M + w_0$, where $C_\M\in \R_\M$ satisfies $|C_\M|>0$ and $w_0\in W^{2-2/p,\frac{2}{p}-2}_p(\M, \R)$ satisfies $\|w_0\|_\infty<\inf |C_\M|$, the porous medium equation~\eqref{S5: PME} has a unique solution  
$$u\in L_p(J,W^{2,-2}_p(\M, \R) ) \cap 	H^1_p(J, L_p(\M, \R)) \oplus \R_\M$$
for some $J=[0,T] $ with $T=T(u_0)>0$. Moreover,
$$u\in C(J, W^{2-2/p,\frac{2}{p}-2}_p(\M, \R))\oplus \R_\M. $$
\item[(ii)] Any  $C_\M\in \R_\M$ with $|C_\M|>0$ is a stationary solution to \eqref{S5: PME} and attracts all solutions which  are initially $W^{2-2/p,\frac{2}{p}-2}_p(\M, \R)$ close to $C_\M$. More precisely, if the initial datum $u_0=C_\M + w_0$ satisfies $w_0\in W^{2-2/p,\frac{2}{p}-2}_p(\M, \R)$ and $\|w_0\|_{2-2/p,p;\frac{2}{p}-2}$ sufficiently small, then the solution $u$ to \eqref{S5: PME} converges to $C_\M$ exponentially fast in $W^{2-2/p,\frac{2}{p}-2}_p(\M, \R)$-topology, in particular, in $C^1$-topology.
\end{itemize}
\end{theorem}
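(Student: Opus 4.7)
The plan is to reduce~\eqref{S5: PME} to a quasilinear abstract Cauchy problem in $E_0 = L_p(\M,\R)$ with constant domain $E_1 = W^{2,-2}_p(\M,\R)$, invoke the linear maximal regularity result Theorem~\ref{S4: Thm-MR}, and then apply the general theory of \cite{CleLi93} for quasilinear parabolic equations. Writing $u = C_\M + \bar u$ and observing that $\gd C_\M \equiv 0$, the problem~\eqref{S5: PME} for $u$ is equivalent to~\eqref{S5: NPME} for $\bar u$. The unknown $\bar u$ will be sought in $\bE(J) := L_p(J,E_1)\cap H^1_p(J,E_0)$; the standard trace embedding $\bE(J)\hookrightarrow C(J, E_{1/p})$ and the spatial embedding~\eqref{S5.1: crucial embedding} force the admissible perturbations $w_0$ to lie in $E_{1/p}\cap U_R$ with $R<\frak{b}$.

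The key analytic step is to verify that $v \mapsto A(v) := -\div(|C_\M + v|^{n-1}\gd\,\cdot)$ is real-analytic from $U_R$ into $\mathcal{MR}_p(E_1,E_0)$. Analyticity into $\L(E_1,E_0)$ follows from Property~\ref{S2: nabla}, Property~\ref{S2: pointwise multiplication} and Lemma~\ref{S51: lem1}, which further supplies the decomposition $|C_\M + v|^{n-1} = C_\M^\ast + \hat a_v$ with $C_\M^\ast\in\R_\M$ positive and $\hat a_v\in BC^{1,\vartheta}(\M,\R)$ for some $\vartheta<0$. Since $\hat a_0 = 0$ and $v\mapsto \hat a_v$ is continuous, shrinking $R$ secures the smallness hypothesis $\|\hat a_v\|_\infty < \inf C_\M^\ast$ required by Theorem~\ref{S4: Thm-MR} uniformly on $U_R$. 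Hence $A(v)\in \mathcal{MR}_p(E_1,E_0)$ throughout $U_R$, and \cite[Theorem~2.1]{CleLi93} yields a unique $\bar u\in \bE(J)$ solving~\eqref{S5: NPME} on a short interval $J=[0,T]$.

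To upgrade this to uniqueness within $\bE(J)\oplus \R_\M$ I use that $\bE(J)\hookrightarrow C(J, BC^{1,(m+2)/p-2}(\M,\R))$ with exponent $(m+2)/p-2<0$, which forces $\bE(J)\cap \R_\M = \{0\}$. Any competitor $v = v_1 + v_2$ with $v_1\in \bE(J)$ and $v_2\in \R_\M$ then satisfies $v_2 = C_\M$ from the initial condition, and $v_1 = \bar u$ by the uniqueness already obtained. Setting $u = C_\M + \bar u$ yields~(i); the continuity statement $u\in C(J, E_{1/p})\oplus\R_\M$ is just the trace embedding of $\bE(J)$.

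For~(ii), the Fr\'echet derivative of $F(v) := \div(|C_\M+v|^{n-1}\gd v)$ at $v=0$ is $\partial F(0) = |C_\M|^{n-1}\Delta$, and applying~\eqref{S4: Lap spectrum} to $\sA = -|C_\M|^{n-1}\Delta$ gives $\sup\{\Rp\mu : \mu\in\sigma(\partial F(0))\} < 0$. Exponential asymptotic stability of the zero solution of~\eqref{S5: NPME} in $E_{1/p}$ then follows from the generalized principle of linearized stability in the maximal regularity framework (see \cite{PruSim16}), and transfers to exponential stability of $C_\M$ for~\eqref{S5: PME}; the $C^1$-convergence drops out of Property~\ref{S2: Sobolev embedding} since $p>m+2$. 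The main obstacle in the program is the uniform verification on $U_R$ of the smallness hypothesis in Theorem~\ref{S4: Thm-MR}; this hinges decisively on the strictly negative weight $\vartheta<0$ supplied by Lemma~\ref{S51: lem1}, which encodes the decay of $\hat a_v$ near the singular ends.
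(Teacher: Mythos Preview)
Your proposal follows essentially the same route as the paper: reduce to the quasilinear problem~\eqref{S5: NPME} for $\bar u=u-C_\M$, verify via Lemma~\ref{S51: lem1} and Theorem~\ref{S4: Thm-MR} that $v\mapsto -\div(|C_\M+v|^{n-1}\gd\,\cdot)$ maps $U_R$ real-analytically into $\mathcal{MR}_p(E_1,E_0)$, invoke \cite[Theorem~2.1]{CleLi93} for local existence, show uniqueness in $\bE(J)\oplus\R_\M$ via $\bE(J)\cap\R_\M=\{0\}$, and deduce stability from the spectral bound~\eqref{S4: Lap spectrum} applied to $\partial F(0)=|C_\M|^{n-1}\Delta$ through the linearized-stability principle. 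All of this matches the paper's argument.

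The one substantive divergence is your step ``shrinking $R$ secures the smallness hypothesis $\|\hat a_v\|_\infty<\inf C_\M^\ast$''. The paper does \emph{not} shrink $R$: it invokes Theorem~\ref{S4: Thm-MR} directly for every $R<\frak b$, citing only the lower bound~\eqref{S5: lower bd} and the weighted decay $\hat a_v\in BC^{1,\vartheta}(\M)$ with $\vartheta<0$ from Lemma~\ref{S51: lem1}. The point is that the literal hypothesis $\|\hat a\|_\infty<\inf C_\M$ in Theorem~\ref{S4: Thm-MR} is merely a convenient way of enforcing $\inf a>0$; inspection of the proof of the lemma preceding Theorem~\ref{S4: Thm-MR} shows that the perturbation argument is carried out only on the collar $\M_r$ near the singular ends, where $\hat a_v$ is automatically small because $\vartheta<0$, while away from the singularities one is on a uniformly regular manifold and only needs $\inf a>0$. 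Your shrinking step would therefore prove a strictly weaker version of part~(i) than stated (existence only for $\|w_0\|_\infty$ small rather than for all $\|w_0\|_\infty<\inf|C_\M|$). To recover the full statement you should argue, as the paper implicitly does, that the conclusion of Theorem~\ref{S4: Thm-MR} persists under the relaxed hypotheses $\inf a>0$ and $\hat a\in BC^{1,\vartheta}(\M)$ with $\vartheta<0$.
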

\begin{remark}
\begin{itemize}
\item[]
\item[(i) ] The wellposedness part can be easily adapted to inhomogeneous porous medium equations with a forcing term $f\in L_p(J,L_p(\M, \R)) $ to obtain the existence and uniqueness of solutions.
\item[(ii) ] If we only focus on the local well-posedness of \eqref{S5: PME}, we can actually relax the restriction on the initial data a little bit. By using the results in \cite[Section~3]{Shao1603}, we can actually show the following existence and uniqueness result. Suppose that $(\M,g;\rho)$ is an $m$-dimensional {\em $C^2$-singular manifold} satisfying the condition (A2') in \cite[Section~3]{Shao1603}. 
If $p>m+2$ and $\vartheta<-m/p$, then for any initial value $u_0=C_\M + w_0$, with  $w_0\in W^{2-2/p,\vartheta}_p(\M, \R)$,  \eqref{S5: PME} has a unique solution  
$$u\in L_p(J,W^{2,\vartheta-2/p}_p(\M, \R) ) \cap 	H^1_p(J, L_p^{\vartheta+2-2/p}(\M, \R)) \oplus \R_\M.$$
Similar result also holds true for inhomogeneous porous medium equation. 
\end{itemize}
\end{remark}

\section{\bf Global mild solutions to the porous medium equations}

In this section, we will use the contraction semigroup result established 
in Section~4 and the nonlinear semigroup theory (cf. \cite{Bar73}) to establish the global 
existence for $L_1$-mild solutions to the porous medium equation
\begin{equation}
\label{S6: PME}
\left\{\begin{aligned}
\partial_t u -\Delta (|u|^{n-1} u) &=0   &&\text{on}&&\M\times (0,\infty);\\
u(0)&=u_0   &&\text{on}&&\M
\end{aligned}\right.
\end{equation}
for $n>1$, i.e.,  the slow diffusion case, on an $m$-dimensional $C^2$-compact singular manifold  $(\M,g;\rho)$  with  wedge ends and $\boldsymbol{\beta}$-removable singularities.
In general, nonlinear semigroup theory applies to multi-valued operators. 
But in order to simply our argument, we will only focus our attention on 
single-valued operator, which is enough to deal with the porous medium 
equation.


Let $X_\R$ be a real Banach lattice with an order $\leq$. 
See \cite[Chapter~C-I]{ArenGrohNage86}. 
The complexification of $X_\R$ is a complex Banach lattice defined by
\begin{equation}
\label{S4.1: Banach lattice}
X:=X_\R \oplus i X_\R. 
\end{equation}
The positive cone of $X_\R$ is defined as
$$X_\R^+:=\{x\in X_\R:\, 0\leq x\}. $$
\begin{definition}
Let $X$ be a complex Banach lattice defined as in \eqref{S4.1: Banach lattice}.
Suppose that $\cA\in \S(X)$. Then the semigroup $\{e^{-t\cA}\}_{t\geq 0}$ is real if 
$$e^{-t\cA}X_\R \subset X_\R ,\quad t\geq 0.$$ 
We say that $\{e^{-t\cA}\}_{t\geq 0}$ is positive if 
$$e^{-t\cA}X_\R^+ \subset X_\R^+  ,\quad t\geq 0.$$ 
\end{definition}

\begin{lem}
\label{S6: Positivity}
For all $\lambda>0$ and $u\in L_1(\M)$,
$$ 
\sup (\id -\lambda \Delta)^{-1}u \leq \max \{0, \sup u\}.
$$
\end{lem}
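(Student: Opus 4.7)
The plan is a classical Stampacchia-type truncation applied to the weak form of the resolvent equation, in the spirit of the $L_\infty$-contractivity argument in the proof of Proposition~\ref{S4: contraction semigroup-Lp}. Since $-\Delta$ is the case $a \equiv 1$ of the operator $\sA$ studied in Section~4, all results there apply, so the resolvent $(\id - \lambda \Delta)^{-1}$ is well-defined and contractive on every $L_p(\M)$, $1 \leq p < \infty$.

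Set $M := \max\{0, \sup u\}$, assume $M < \infty$ (otherwise the claim is vacuous), and let $v := (\id - \lambda \Delta)^{-1} u$; the task reduces to $(v - M)^+ \equiv 0$ almost everywhere. By density of $L_1(\M) \cap L_\infty(\M)$ in $L_1(\M)$ and $L_1$-contractivity of the resolvent, it suffices to treat $u \in L_1(\M) \cap L_\infty(\M)$. Since the compact singular manifold $\M$ has finite total volume — each model wedge $J \times B \times \Gamma$ is of finite volume for $dt^2 + g_B + g_\Gamma$ — such $u$ also lies in $L_2(\M)$, and by consistency of resolvents across $L_p$-spaces the same $v$ arises from the $L_2$-resolvent, so $v \in \mathrm{dom}(\sA_2) \cap L_\infty(\M) \subset W^1_2(\M) \cap L_\infty(\M)$. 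The weak resolvent identity
$$\int_\M v\,\phi\,d\mu_g + \lambda \int_\M (\nabla v \mid \nabla \phi)_g\,d\mu_g = \int_\M u\,\phi\,d\mu_g, \qquad \phi \in W^1_2(\M),$$
tested against $\phi = (v - M)^+$, combined with the identity $\nabla(v - M)^+ = \chi_{\{v > M\}} \nabla v$ and the decomposition $v - u = (v - M) - (u - M)$, gives
$$\int_\M \bigl((v - M)^+\bigr)^2\,d\mu_g + \lambda \int_\M |\nabla (v - M)^+|_g^2\,d\mu_g = \int_\M (u - M)(v - M)^+\,d\mu_g.$$
Since $u \leq M$ almost everywhere by construction, the right-hand side is $\leq 0$; since the left-hand side is $\geq 0$, both must vanish, forcing $(v - M)^+ \equiv 0$ and hence $\sup v \leq M$.

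The main obstacle, and precisely the reason for the asymmetric $\max\{0, \sup u\}$, is justifying that $(v - M)^+$ is an admissible test function in $W^1_2(\M) = \overline{\mathcal{D}(\M)}^{\|\cdot\|_{1,2}}$. Picking $v_k \in \mathcal{D}(\M)$ approximating $v$, the requirement $M \geq 0$ guarantees $v_k - M \leq 0$ outside $\supp v_k$, so $(v_k - M)^+$ remains compactly supported and Lipschitz; standard mollification then produces a sequence in $\mathcal{D}(\M)$ converging to $(v - M)^+$ in the $\|\cdot\|_{1,2}$-norm. Were $M$ negative, these truncations would approach the nonzero constant $-M$ at the singular ends, and by the Poincar\'e inequality of Theorem~\ref{S4: Poincare ineq} no nonzero constant lies in $W^1_2(\M)$, so the test function method would simply fail.
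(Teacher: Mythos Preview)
Your proof is correct and takes a genuinely different route from the paper's. The paper argues via the semigroup: it uses the Laplace representation $(\lambda-\Delta)^{-1}=\int_0^\infty e^{-\lambda t}e^{t\Delta}\,dt$ together with (i) the $L_\infty$-contractivity of $e^{t\Delta}$ already obtained in Proposition~\ref{S4: contraction semigroup-Lp}, and (ii) positivity of $e^{t\Delta}$ established through the Beurling--Deny/Ouhabaz criterion $\a(u^+,u^-)=0$; combining these on the decomposition $u=u^+-u^-$ yields the one-sided bound. Your Stampacchia truncation bypasses semigroup positivity entirely, working directly with the weak resolvent identity and the test function $(v-M)^+$; the asymmetry $\max\{0,\sup u\}$ then appears transparently as the condition needed for $(v-M)^+$ to remain in $W^1_2(\M)=\overline{\mathcal D(\M)}$. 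Two minor remarks: the finite-volume observation is unnecessary, since $L_1\cap L_\infty\subset L_2$ always by $\|u\|_2^2\le\|u\|_1\|u\|_\infty$; and in the density reduction you should note that one can choose approximants $u_k=\max\{u,-k\}$ so that $\sup u_k=\sup u$ for large $k$, as $L_1$-convergence alone does not control suprema. The paper's approach has the side benefit of recording positivity of both semigroup and resolvent on $L_1$, which are of independent interest; yours is more self-contained and explains the form of the bound more directly.
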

\begin{proof}
By semigroup theory and Proposition~\ref{S4: contraction semigroup-Lp}, for any $\lambda>0$ 
\begin{equation}
\label{S6: eq1}
(\lambda - \Delta)^{-1} u= \int_0^\infty e^{-\lambda t} e^{t\Delta}u\, dt.
\end{equation}
It follows from the $L_\infty$-contractivity of $\Delta$ shown in the proof for Proposition~\ref{S4: contraction semigroup-Lp} that
$$
\| (\lambda - \Delta)^{-1} u\|_\infty \leq \int_0^\infty e^{-\lambda t} \|u\|_\infty\,dt = \frac{1}{\lambda} \|u\|_\infty.
$$
This implies that for all $\lambda>0$ and $u\in L_\infty(\M)$
$$
\| (\id -\lambda \Delta)^{-1}u \|_\infty \leq \|u\|_\infty.
$$
If $u\in L_1(\M)\setminus L_\infty(\M)$, then the above inequality apparently holds true. Thus we actually proved that
\begin{equation}
\label{S6: eq2}
\| (\id -\lambda \Delta)^{-1}u \|_\infty \leq \|u\|_\infty,\quad \lambda>0,\, u\in L_1(\M).
\end{equation} 

Next we will show that the semigroup $\{e^{t\Delta}\}_{t\geq 0}$ is positive on $L_2(\M)$. 
It is easy to see that 
\begin{center}
$u\in W^{2,-2}_2(\M)$ implies that $\bar{u}\in W^{2,-2}_2(\M)$ and $\Delta \bar{u}=\overline{\Delta u}$. 
\end{center}
Here $\bar{u}$ stands for the complex conjugate of $u$. 
By \cite[Chapter~C-II Remark~3.1]{ArenGrohNage86}, the semigroup $\{e^{t\Delta}\}_{t\geq 0}$ is real.

On the other hand, for any $u\in W^1_2(\M,\R)$ implies that $u^+\in  W^1_2(\M)$ and
$$ 
\a(u^+, u^-)=\int_\M (\nabla u^+ | \nabla u^-)_g\, d\mu_g=0. 
$$
By \cite[Theorem~2.4]{Ouh92}, we conclude that $\{e^{t\Delta}\}_{t\geq 0}$ is positive on $L_2(\M)$. 
Furthermore, pick $u\in L_1(\M,\R_+)$ and a sequence $(u_k)_k$ in $L_2(\M)$ converging to $u$ in $L_1(\M)$. Without loss of generality, we may assume $u_k\in L_2(\M,\R_+)$.
If 
$$
\| (e^{t\Delta}u)^-\|_1\geq C>0,
$$
then by the positivity of $\{e^{t\Delta}\}_{t\geq 0}$ on $L_2(\M)$, we have
$$
\| e^{t\Delta}(u-u_k)\|_1 \geq \| (e^{t\Delta}u)^-\|_1 \geq C>0.
$$ 
A contradiction. This implies that $\{e^{t\Delta}\}_{t\geq 0}$ is positive on $L_1(\M)$.
For any $u\in L_1(\M,\R_+)$, applying \eqref{S6: eq1} once more yields
$$
(\lambda-\Delta)^{-1} u =\int_0^\infty e^{-\lambda t} e^{t\Delta }u\, dt \geq 0,\quad \lambda>0.
$$
This gives the positivity of $(\id -  \lambda\Delta)^{-1}$ on $L_1(\M)$, i.e., for all $\lambda>0$
$$
(\id-  \lambda\Delta)^{-1} L_1(\M,\R_+)\subset L_1(\M,\R_+).
$$

Now given any $u\in L_1(\M)$, we decompose it into $u=u^+ - u^-$. 
By \eqref{S6: eq2}, it holds
$$
\sup(\id - \lambda\Delta)^{-1} u^+ \leq \|(\id - \lambda\Delta)^{-1} u^+\|_\infty \leq \|u^+\|_\infty= \sup u^+.
$$
On the other hand, since $-u^-\leq 0$, it follows from the positivity of $(\id - \lambda\Delta)^{-1} $ on $L_1(\M)$ that
$$
\sup (\id - \lambda\Delta)^{-1} (-u^-)\leq 0.
$$
Combining together these two estimates, the asserted claim then follows.
\end{proof}

In the sequel, we denote the $L_1(\M)$-realization of $\Delta$ by $\Delta_1$.
\begin{lem}
\label{S6: inverse}
There exists some $\alpha>0$ such that
$$
\alpha\|u\|_1 \leq \|\Delta_1 u\|_1, \quad u\in dom(\Delta_1).
$$
\end{lem}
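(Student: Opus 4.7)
My plan is to deduce the inequality directly from the spectrum bound \eqref{S4: Lap spectrum} established in Proposition~\ref{S4: contraction semigroup-Lp}, applied to the $L_1$-realization $\Delta_1$ (this is the case $a \equiv 1$ of the operator $-\sA$, so all results of Section~4 apply with $p=1$).

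The key observation is that \eqref{S4: Lap spectrum} asserts
$$
\sup\{\mathrm{Re}(\mu) : \mu \in \sigma(-\Delta_1)\} < 0,
$$
which means $0 \in \rho(-\Delta_1) = \rho(\Delta_1)$. Consequently, $\Delta_1 : dom(\Delta_1) \to L_1(\M)$ is a linear isomorphism onto $L_1(\M)$, and its inverse $\Delta_1^{-1} \in \L(L_1(\M))$ is bounded. Writing $u = \Delta_1^{-1}(\Delta_1 u)$ for $u \in dom(\Delta_1)$ yields
$$
\|u\|_1 = \|\Delta_1^{-1}(\Delta_1 u)\|_1 \leq \|\Delta_1^{-1}\|_{\L(L_1(\M))} \, \|\Delta_1 u\|_1,
$$
and the claim follows with $\alpha := \|\Delta_1^{-1}\|_{\L(L_1(\M))}^{-1} > 0$.

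The only point requiring any care is to justify that the spectrum bound genuinely holds for the $L_1$-realization and not merely for $L_p$ with $p>1$ or $p=2$. This is the step I would flag as the main (mild) obstacle: Proposition~\ref{S4: contraction semigroup-Lp} is stated for $1 \leq p < \infty$, and its proof proceeds by showing that the same argument yields a contraction semigroup for $-\sA + \omega$ with $\omega > 0$ sufficiently small. Since the $L_\infty$-contractivity argument via \cite[Theorem~2.7]{Ouh92} and the extension to $L_p$ via \cite[Theorem~1.4.1]{Dav89} both pass through $p=1$, the same reasoning applies and yields $\omega + \Delta_1$ generating a contraction semigroup, which forces $\sigma(\Delta_1) \subset \{\mathrm{Re}\,\mu \leq -\omega\}$. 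In particular, $0 \in \rho(\Delta_1)$, and the argument above closes.
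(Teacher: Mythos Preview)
Your proposal is correct and follows essentially the same approach as the paper: the paper's own proof is the single line ``The assertion follows from Proposition~\ref{S4: contraction semigroup-Lp},'' and you have simply unpacked that reference by using the spectrum bound \eqref{S4: Lap spectrum} (valid for $p=1$) to conclude $0\in\rho(\Delta_1)$ and hence $\|\Delta_1^{-1}\|_{\L(L_1(\M))}<\infty$. Your discussion of why the spectrum bound applies at $p=1$ is a faithful elaboration of the proof of Proposition~\ref{S4: contraction semigroup-Lp} itself.
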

\begin{proof}
The assertion follows from Proposition~\ref{S4: contraction semigroup-Lp}.
\end{proof}

Let $\Phi(x)=|x|^{n-1}x$ and $\beta=\Phi^{-1}$. Then they are maximal monotone graphs in $\R^2$ containing $(0,0)$. Moreover, for any $\lambda>0$, so is $\beta/\lambda$.

By Proposition~\ref{S4: contraction semigroup-Lp}, Lemmas~\ref{S6: Positivity} and \ref{S6: inverse}, we can apply \cite[Theorem~1]{BreStr73} and prove the following theorem.
\begin{theorem}
\label{S6: semilinear thm}
For any $f\in L_1(\M)$ and all $\lambda>0$, there exists a unique solution $u\in dom(\Delta_1)$ to 
$$ -\Delta u +\beta(u)/\lambda=f .$$
Moreover, for any $f_1,f_2\in L_1(\M)$, the corresponding solutions $u_1,u_2$ satisfy  
$$
\|\beta(u_1)/\lambda - \beta(u_2)/\lambda \|_1\leq \|f_1-f_2\|_1.
$$
\end{theorem}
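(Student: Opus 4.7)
The plan is to recognize this as a direct application of the Brezis--Strauss theorem \cite[Theorem~1]{BreStr73}, which solves semilinear elliptic problems of the form $-Au + \gamma(u) \ni f$ with $L_1$ data, provided $A$ generates an appropriate sub-Markov contraction semigroup on $L_1$, is suitably coercive there, and $\gamma$ is a maximal monotone graph in $\R^2$ with $0 \in \gamma(0)$. All three ingredients have already been assembled in the paragraphs immediately preceding the theorem.

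First I would record the structure of the nonlinearity: $\Phi(x) = |x|^{n-1}x$ is a strictly increasing homeomorphism of $\R$, so $\beta = \Phi^{-1}$ is continuous, monotone, and satisfies $\beta(0) = 0$; consequently $\beta/\lambda$ is a maximal monotone graph in $\R^2$ containing $(0,0)$, which is exactly the class of nonlinearities Brezis--Strauss permit. Next I would verify the three hypotheses on the linear part in turn. Proposition~\ref{S4: contraction semigroup-Lp} gives that $\Delta_1$ generates a contraction strongly continuous semigroup on $L_1(\M)$, so $-\Delta_1$ is $m$-accretive. Lemma~\ref{S6: Positivity}, $\sup (\id - \lambda\Delta)^{-1} u \leq \max\{0, \sup u\}$, is precisely the $T$-accretivity / sub-Markov bound required on the resolvent. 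Finally, Lemma~\ref{S6: inverse}, $\alpha \|u\|_1 \leq \|\Delta_1 u\|_1$, supplies the $L_1$-coercivity needed to separate $-\Delta_1$ from the origin of its spectrum and thus to invert the sum $-\Delta_1 + \beta/\lambda$.

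With these ingredients verified, \cite[Theorem~1]{BreStr73} applies and produces, for every $f \in L_1(\M)$ and every $\lambda > 0$, a unique $u \in dom(\Delta_1)$ satisfying $-\Delta u + \beta(u)/\lambda = f$. The $L_1$-contraction
\[
\|\beta(u_1)/\lambda - \beta(u_2)/\lambda\|_1 \leq \|f_1 - f_2\|_1
\]
is the second assertion of that theorem; it arises from the standard argument of subtracting the two equations, testing against a smooth approximation of $\mathrm{sign}(\beta(u_1) - \beta(u_2))$, using the $T$-accretivity of $-\Delta_1$ to kill the contribution of the Laplacian term, and invoking the monotonicity of $\beta$ on the nonlinear term.

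The main obstacle at this step is not any further analysis on $(\M, g; \rho)$ but rather matching the hypotheses to the abstract Banach-lattice formulation in \cite{BreStr73}; the lemmas of this section have been phrased precisely so that this matching is automatic. In particular, Lemma~\ref{S6: Positivity} was stated in terms of $\sup$ rather than $\|\cdot\|_\infty$ so that both halves of the sub-Markov condition (positivity preservation and $L_\infty$-contractivity) are built in, and Lemma~\ref{S6: inverse} was extracted from the spectral bound \eqref{S4: Lap spectrum} in exactly the form needed to conclude that $-\Delta_1$ has a continuous inverse on its range.
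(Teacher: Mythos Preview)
Your proposal is correct and matches the paper's own proof exactly: the paper states the theorem immediately after the sentence ``By Proposition~\ref{S4: contraction semigroup-Lp}, Lemmas~\ref{S6: Positivity} and \ref{S6: inverse}, we can apply \cite[Theorem~1]{BreStr73} and prove the following theorem.'' Your write-up simply makes explicit how each of those three ingredients feeds into the hypotheses of Brezis--Strauss, which is precisely the intended argument.
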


\vspace{1em}

\begin{definition}
\cite[Chapter II.3]{Bar73}
\begin{itemize}
\item[]
\item[(i)] A nonlinear operator $\cA$ defined in a Banach space $X$ is called accretive if for all $\lambda>0$
\begin{equation}
\label{S6: contraction}
\|(\id +\lambda \cA) x_1 - (\id +\lambda \cA) x_2\|_X\geq \|x_1-x_2\|_X,\quad x_1,x_2\in dom(\cA).
\end{equation}
\item[(ii)] A nonlinear operator $\cA$ defined in a Banach space $X$ is called $m$-accretive if $\cA$ is accretive and it satisfies the range condition
$$
Rng(\id +\lambda \cA)=X, \quad \lambda>0.
$$
\end{itemize}
\end{definition}

Let $\cA(u)=-\Delta\Phi(u)$ and $dom(\cA)=\beta(dom(\Delta_1))$. Then by the strict monotonicity of $\Phi$, it is a bijection from $dom(\cA)$ to $dom(\Delta_1)$. Note that $u\in dom(\cA)$ iff $\Phi(u)\in dom(\Delta_1)\subset L_1(\M)$. This shows that $u\in L_n(\M)\subset L_1(\M)$.

Theorem~\ref{S6: semilinear thm} implies that, for any $\lambda>0$, $(\id+\lambda\cA)^{-1}$ is a bijection between $dom(\cA)$ and $L_1(\M)$, and \eqref{S6: contraction} is fulfilled. Therefore, $\cA(u)=-\Delta\Phi(u)$ is $m$-accretive. 

To characterize the domain of $\cA$, we will first prove that  
$$dom(\cA)=\{u\in L_1(\M):\, \cA(u)\in L_1(\M)\}. $$
Indeed, as we have seen that
$$
dom(\cA)=\beta(dom(\Delta_1))\subset L_1(\M).
$$
As $v\in L_1(\M)$ belongs to $dom(\Delta_1)$ iff $\Delta_1 v\in L_1(\M)$, the above inclusion reveals that $u\in L_1(\M)$ belongs to $dom(\cA)$ iff $\cA(u)\in L_1(\M)$. 

For any $u\in W^{2,-2}_p(\M)$ with $p>sn$, we have for sufficiently small $s>1$
$$
\int_\M |\rho^{-2n}|u|^n|^s\,d\mu_g= \int_\M |\rho^{-2} u|^{sn}\,d\mu_g \leq C \| u \|_{p;-2}. 
$$
An easy computation shows that $\nabla \Phi(u)= n\nabla u |u|^{n-1}$. It holds that
\begin{align*}
\int_\M |\rho^{-2n+1}|\nabla u|_g |u|^{n-1}|^s\, d\mu_g &= \int_\M (\rho^{-1}|\nabla u|_g )^s (|\rho^{-2}u|)^{s(n-1)}\, d\mu_g\\
&\leq C \|\nabla u\|_{p;-2}^s \|u\|_{p;-2}^{p-s}.
\end{align*}
Since $\nabla^2 \Phi(u)= n \nabla^2 u |u|^{n-1} + n(n-1)(\sg u)\nabla u^{\otimes 2} |u|^{n-2}$, by a similar estimate as for  $\nabla \Phi(u)$, we have
\begin{align*}
\|\nabla^2 \Phi(u)\|_{s;-2n}\leq C(\|\nabla^2 u\|_{p;-2}^s \|u\|_{p;-2}^{p-s} + \|\nabla u\|_{p;-2}^{2s}\|u\|_{p;-2}^{p-2s}).
\end{align*}
In sum, the above estimates show that $\Phi(W^{2,-2}_p(\M))\subset W^{2,-2n}_s(\M)$ for some $s>1$ and $p>sn$. Applying Properties~\ref{S2: nabla} and \ref{S2: embedding} yields
$$
\cA(u)\in L^{2-2n}_s(\M) \hookrightarrow L_1(\M),\quad u\in W^{2,-2}_p(\M),
$$
which further implies that
\begin{center}
$W^{2,-2}_p(\M)\subset dom(\cA) \Longrightarrow dom(\cA)$ is dense in $L_1(\M)$.
\end{center}

Now we can apply the well-known Crandall-Liggett generation theorem \cite[Theorem~I]{CraLig71} to prove the $L_1$-global mild solution to \eqref{S6: PME}. 
Mild solutions are defined as the limit of a sequence of approximation solutions by implicit time discretization. 
See the introduction or \cite[Chapter 10.2]{Vaz07} for more details.
\begin{theorem}
\label{S6: global exist thm}
Suppose that $(\M,g;\rho)$ is an $m$-dimensional $C^2$-compact singular manifold  with  wedge ends and $\boldsymbol{\beta}$-removable singularities. 
Assume that $n>1$. 
For every $u_0\in L_1(\M)$, \eqref{S6: PME} has a unique global solution 
$$
u\in C([0,\infty), L_1(\M)).
$$	
Moreover, every two solutions $u_1$  and $u_2$ corresponding to the initial data $u_{0,1}$ and $u_{0,2}$ satisfy
$$
\|u_1 -u_2\|_1 \leq \|u_{0,1} -u_{0,2} \|_1.
$$
\end{theorem}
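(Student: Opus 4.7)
The plan is to apply the Crandall--Liggett generation theorem directly to the operator $\cA(u) = -\Delta\Phi(u)$ acting in $X = L_1(\M)$, using the structural properties that have already been established in the build-up immediately preceding the theorem statement. Specifically, Theorem~\ref{S6: semilinear thm} shows that the range condition $\mathrm{Rng}(\id+\lambda\cA) = L_1(\M)$ holds for every $\lambda>0$, while the contraction estimate there translates to the accretivity inequality \eqref{S6: contraction}, so $\cA$ is $m$-accretive. Combined with the density statement $\overline{dom(\cA)} = L_1(\M)$ (which in turn follows from $W^{2,-2}_p(\M) \subset dom(\cA)$ and the density of $W^{2,-2}_p(\M)$ in $L_1(\M)$), the hypotheses of the Crandall--Liggett theorem \cite{CraLig71} are all in place.

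Concretely, I would proceed in three short steps. First, recall that $m$-accretivity of $\cA$ together with $\overline{dom(\cA)} = L_1(\M)$ implies via Crandall--Liggett that the exponential formula
\begin{equation*}
S(t)u_0 := \lim_{k\to\infty}\bigl(\id + (t/k)\cA\bigr)^{-k} u_0
\end{equation*}
defines, for each $u_0 \in L_1(\M)$, a strongly continuous contraction semigroup of nonlinear operators on $L_1(\M)$. Second, identify $u(t):=S(t)u_0$ with the mild solution to \eqref{S6: PME} in the sense defined in the Introduction: indeed, the partitions $t_i = it/k$ with $\delta_i = t/k$ and the resolvent iteration $u_i = (\id + \delta_i \cA)^{-1}u_{i-1}$ produce, for any prescribed $\varepsilon>0$, an $\varepsilon$-discretization whose piecewise-constant interpolant approximates $u(t)$ uniformly on $[0,T]$ for every $T<\infty$, by the standard Crandall--Liggett error bound. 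Third, derive the $L_1$-contraction estimate $\|u_1(t)-u_2(t)\|_1 \le \|u_{0,1}-u_{0,2}\|_1$ and uniqueness both directly from the accretivity of $\cA$: passing to the limit $k\to\infty$ in the discrete inequality obtained by iterating $\|(\id+\lambda\cA)^{-1} f_1 - (\id+\lambda\cA)^{-1} f_2\|_1 \le \|f_1 - f_2\|_1$ yields the asserted contractivity, and uniqueness of the mild solution for a given initial datum is immediate from taking $u_{0,1} = u_{0,2}$.

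I do not anticipate any substantive obstacle in the proof itself, since the heavy lifting has been done: the $L_\infty$-contractivity (Proposition~\ref{S4: contraction semigroup-Lp}), the positivity of the resolvent (Lemma~\ref{S6: Positivity}), the spectral bound (Lemma~\ref{S6: inverse}), and the resulting B\'enilan--Br\'ezis--Strauss type solvability result (Theorem~\ref{S6: semilinear thm}) together furnish $m$-accretivity. The only point that merits a sentence of care is the density of $dom(\cA)$ in $L_1(\M)$; here I would record that the computation preceding the theorem already gives $W^{2,-2}_p(\M) \subset dom(\cA)$ for $p$ large enough, and $W^{2,-2}_p(\M)$ is dense in $L_1(\M)$ (for instance because $\mathcal{D}(\M) \subset W^{2,-2}_p(\M)$ and $\mathcal{D}(\M)$ is dense in $L_1(\M)$ on a singular manifold). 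With these ingredients in hand, the theorem follows as an immediate application of Crandall--Liggett.
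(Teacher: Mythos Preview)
Your proposal is correct and matches the paper's approach essentially line for line: the paper likewise assembles $m$-accretivity of $\cA(u)=-\Delta\Phi(u)$ from Theorem~\ref{S6: semilinear thm}, verifies density of $dom(\cA)$ in $L_1(\M)$ via the inclusion $W^{2,-2}_p(\M)\subset dom(\cA)$, and then invokes the Crandall--Liggett theorem \cite[Theorem~I]{CraLig71} to conclude. The only difference is presentational---you spell out the exponential formula and the passage from discrete resolvent contractions to the $L_1$-contraction of solutions, whereas the paper simply cites Crandall--Liggett for the whole package.
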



\section*{Acknowledgements}
The author would like to express his gratitude to Prof. Elmar Schrohe for helpful discussions.

\end{document}